\sloppy\pagestyle{plain}%
\theoremstyle{definition}
\theoremstyle{remark}
\theoremstyle{corollary}
\theoremstyle{theorem}
\theoremstyle{corollary}
\newtheorem{theorem}{Theorem}[section]
\newtheorem{lemma}[theorem]{Lemma}
\newtheorem{proposition}[theorem]{Proposition}
\theoremstyle{corollary}
\theoremstyle{definition}
\newtheorem{definition}[theorem]{Definition}
\theoremstyle{remark}
\newtheorem{remark}[theorem]{Remark}
\numberwithin{equation}{section}
\newcommand{\C}{\mathbb{C}}
\newcommand{\Q}{\mathbb{Q}}
\newcommand{\sO}{\mathcal{O}}
\def\P{\mathbb{P}}
\def\F{\mathbb{F}}
\def\G{\mathbb{G}}
\def\Fitt{\operatorname{Fitt}}
\def\reg{\operatorname{reg}}
\def\codim{\operatorname{codim}}
\def\embdim{\operatorname{embdim}}
\def\mult{\operatorname{mult}}
\def\Sing{\operatorname{sing}}
\def\Sec{\operatorname{Sec}}
\def\length{\operatorname{length}}
\DeclareMathOperator{\Bl}{Bl}                    
\newcommand{\sP}{\mathcal{P}}
\newcommand{\suchthat}{\;\ifnum\currentgrouptype=16 \middle\fi|\;}
\title[A Castelnuovo-Mumford regularity bound for threefolds]{A Castelnuovo-Mumford regularity bound for \\ threefolds with rational singularities}
\begin{document}

\author{Wenbo Niu}
\address{Department of Mathematical Sciences, University of Arkansas, Fayetteville, AR, 72701, USA}
\email{wenboniu@uark.edu}

\author{Jinhyung Park}
\address{Department of Mathematical Sciences, KAIST, Daejeon 34141, Republic of Korea}
\email{parkjh13@kaist.ac.kr}

\thanks{J. Park was supported by the National Research Foundation (NRF) funded by the
Korea government (MSIT) (NRF-2021R1C1C1005479).}

\subjclass[2010]{14N05, 14B05, 13D02}

\date{\today}

\dedicatory{Dedicated to Professor Sijong Kwak on the occasion of his sixtieth birthday.}

\keywords{Castelnuovo-Mumford regularity, projection, threefold, rational singularity, Loewy length, secant variety}

\begin{abstract}
The purpose of this paper is to establish a Castelnuovo-Mumford regularity bound for threefolds with mild singularities. Let $X$ be a non-degenerate normal projective threefold in $\P^r$ of degree $d$ and codimension $e$. We prove that if $X$ has rational singularities, then $\reg(X) \leq d-e+2$. Our bound is very close to a sharp bound conjectured by Eisenbud-Goto. When $e=2$ and $X$ has Cohen-Macaulay Du Bois singularities, we obtain the conjectured bound $\reg(X) \leq d-1$, and we also classify the extremal cases.
To achieve these results, we bound the regularity of fibers of a generic projection of $X$ by using Loewy length, and also bound the dimension of the varieties swept out by secant lines through the singular locus of $X$.
\end{abstract}

\maketitle


\section{Introduction}

\noindent Throughout the paper, we work over the field $\C$ of complex numbers. 
Let $X \subseteq \P^r$ be a non-degenerate projective variety of degree $d$ and codimension $e$.
We say that $X \subseteq \P^r$ is \emph{$m$-regular} in the sense of Castelnuovo-Mumford if $H^i(\P^r, \mathcal{I}_{X|\P^r}(m-i))=0$ for all $i>0$. It is equivalent to saying that $X \subseteq \P^r$ is $(m-1)$-normal and $\mathcal{O}_X$ is $(m-1)$-regular, i.e., the natural restriction map $H^0(\P^r, \mathcal{O}_{\P^r}(m-1)) \to H^0(X, \mathcal{O}_X(m-1))$ is surjective and $H^i(X, \mathcal{O}_X(m-1-i))=0$ for $i>0$. The minimal such number $m$ is called the \emph{Castelnuovo-Mumford regularity} of $X$ and is denoted by $\reg(X)$. For more details, we refer to \cite[Section 1.8]{positivityI}.

One of the fundamental problems in projective algebraic geometry is to find a bound for $\reg(X)$ in terms of algebraic and geometric data such as the degree $d$ and the codimension $e$. A sharp bound was suggested by Eisenbud-Goto \cite{EG} (cf. \cite{GLP}) in the \emph{regularity conjecture}, which says that
$$
\reg(X) \leq d-e+1.
$$
There is also a folklore conjecture which predicts that the maximal degree, denoted by $\max\deg(X)$, of minimal generators of the defining ideal $I_{X|\P^r}$ of $X \subseteq \P^r$ is at most $d$. Since $\max\deg(X) \leq \reg(X)$, the regularity conjecture implies this folklore conjecture. The regularity conjecture has only been proved for a few cases: integral curves \cite{C}, \cite{GLP}, connected curves \cite{Giaimo:CMRegofCurves}, smooth surfaces \cite{P}, \cite{L}, singular surfaces with rational or Gorenstein elliptic singularities \cite{Ni}, and smooth threefold in $\P^r$ with $r\geq 9$ \cite{R1}. Slightly weaker bounds for lower dimensional smooth varieties were obtained in \cite{Kw1, Kw2}.  A sharp bound for $\reg(\mathcal{O}_X)$ was shown in \cite{KP}, \cite{No} for a smooth variety $X$ of arbitrary dimension. This result is generalized in \cite{MPS} to a variety $X$ with $\Q$-Gorenstein isolated singularities.

It turns out that the regularity conjecture is not true. McCullough-Peeva \cite{MP} constructed counterexamples to the regularity conjecture, but their examples are highly singular. It is then a natural problem to make a distinction between mildly singular varieties satisfying the regularity conjecture and highly singular varieties not satisfying the regularity conjecture. The threefold case is particularly interesting because one of the counterexamples to the regularity conjecture is a projective threefold $X$ in $\P^5$ such that $\dim X_{\Sing}=1, \deg(X)=31,$ and $\max\deg(X)=\reg(X)=38$  (\cite[Example 4.7]{MP}) while the regularity conjecture holds for a smooth projective threefold in $\P^5$ by \cite{Kw}. Notice also that the singular threefold $X$ in $\P^5$ does not even fulfill the folklore conjecture aforementioned. 

In this paper, we consider normal projective threefolds with rational or Du Bois singularities. Those singularities are naturally appearing inevitably in many places such as minimal model program, moduli theory, and Hodge theory. Recently, it was shown that the secant varieties of smooth projective curves have normal  Du Bois singularities (see \cite{ENP}). In a variety of flavor, it is highly desirable to have the conjectured bound or a slightly weaker bound for $\reg(X)$, where $X$ is a normal projective threefold with rational or Du Bois singularities. It is also interesting to see how local geometric properties of those singularities are related to global algebraic properties of the embedded projective varieties.

The first main result of this paper is to establish the following regularity bound for threefolds with rational singularities, extending Kwak's result for smooth threefolds in \cite{Kw1}. Our bound is only off by 1 from the bound in the regularity conjecture, and in particular, the maximal degree can be bounded by the degree.

\begin{theorem}\label{main}
Let $X \subseteq \P^r$ be a non-degenerate normal projective threefold of degree $d$ and codimension $e$. Suppose that $X$ has rational singularities. Then we have
$$
\reg(X) \leq d-e+2.
$$
\end{theorem}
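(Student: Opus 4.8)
The plan is to combine the standard inner-projection technique for bounding Castelnuovo--Mumford regularity with the geometric input coming from rational singularities. First I would reduce to a cohomological statement: by the characterization recalled in the introduction, it suffices to prove that $X\subseteq\P^r$ is $(d-e+2)$-normal and that $\sO_X$ is $(d-e+2)$-regular, and the latter — bounding $H^i(X,\sO_X(d-e+2-i))$ for $i>0$ — is by far the easier half, handled by vanishing theorems once one knows $X$ has rational (hence Du Bois, hence Cohen--Macaulay in low codimension after a further normality argument) singularities together with the degree bound on the dualizing sheaf. The main work is the surjectivity of $H^0(\P^r,\sO_{\P^r}(m))\to H^0(X,\sO_X(m))$ for $m\geq d-e+1$.

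\smallskip

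For the normality statement I would project $X$ from a general point $x\in X$ to obtain $\pi\colon X\dashrightarrow \overline{X}\subseteq\P^{r-1}$, where $\overline{X}$ is again a non-degenerate projective threefold of degree $d-1$ and codimension $e-1$ (so that one can set up an induction on the codimension $e$, the base case $e=1$ being a hypersurface, which is trivially regular). Resolving the projection, one gets a morphism from the blow-up $\Bl_xX$ whose fibers over $\overline{X}$ control the difference between the section modules of $X$ and $\overline{X}$; concretely, one obtains an exact sequence relating $H^0(\sO_X(m))$, $H^0(\sO_{\overline X}(m))$, and the cohomology of the structure sheaves of the finite fibers of the generic projection. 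Here is where the two auxiliary results advertised in the abstract enter: one needs (i) a bound on the regularity of the finite fibers of a generic projection of $X$, phrased via Loewy length of the associated Artinian algebras, which forces the "excess" cohomology to vanish in degrees $\geq d-e+1$; and (ii) a bound on the dimension of the union of secant lines to $X$ passing through $X_{\Sing}$, which guarantees that a general center of projection avoids the bad locus so that the fibers really are as mild as claimed (in particular $0$-dimensional and of controlled length). Rational singularities feed into both: they give $R^i\pi_*=0$ type vanishing on a resolution and bound the local invariants (multiplicity, embedding dimension, Loewy length) of the fibers.

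\smallskip

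So the skeleton is: (1) set up the generic inner projection $\pi$ and the comparison exact sequence between the Betti/section data of $X$ and $\overline X$; (2) use the secant-line dimension bound to ensure the projection center is generic enough that $\overline X$ is still normal with rational singularities and the fibers of the resolved projection are finite of bounded length; (3) use the Loewy-length regularity bound for those fibers to kill the relevant $H^1$ of the ideal-sheaf of the fiber scheme in degrees $\geq d-e+1$, yielding $(d-e+1)$-normality of $X$ from that of $\overline X$ by descending induction on $e$; (4) separately, bound $H^i(\sO_X(d-e+2-i))$ for $i\geq 1$ using Kodaira-type vanishing valid under rational singularities plus the bound $\deg\omega_X\leq$ (something controlled by $d$); (5) assemble (3) and (4) into $(d-e+2)$-regularity. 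The main obstacle I anticipate is step (2)–(3): keeping the singularities of $\overline X$ under control under projection — normality can fail and rational singularities are not obviously preserved — and simultaneously getting the Loewy-length estimate sharp enough that the threshold comes out as $d-e+1$ rather than something weaker; this is presumably exactly why the final bound is $d-e+2$ and not the conjectured $d-e+1$, the lost unit being the price of handling the fibers through the singular locus.
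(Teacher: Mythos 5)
Your proposal correctly identifies the two auxiliary inputs (Loewy-length control of fibers, and the bound on the dimension of the union of secant lines through $X_{\Sing}$), but the framework you wrap them in has a genuine gap that you yourself flag without resolving: an induction on the codimension $e$ via inner projection from a general point $x\in X$ requires the image $\overline{X}\subseteq\P^{r-1}$ to again be a non-degenerate \emph{normal} threefold with \emph{rational} singularities, and neither normality nor rationality is preserved under inner projection (the image acquires a double-point locus, is typically non-normal along it, and the inductive hypothesis simply does not apply). The paper avoids this entirely: it performs a \emph{single outer} generic projection $\pi\colon X\to\overline{X}\subseteq\P^{4}$ from an $(r-5)$-plane $\Lambda$ disjoint from $X$, so that $\pi$ is finite, $X$ itself keeps its rational singularities, and no hypothesis on $\overline{X}$ is ever needed; the regularity bound then comes from exhibiting a surjection $w_{\mathcal V}\colon\bigoplus_j V_j\otimes\sO_{\P^4}(-j)\to\pi_*\sO_X$ whose kernel is a vector bundle $E$ with $\reg(E^*)\le -2$, and reading off $\reg(X)\le d-e+1+\sum_{j\ge 3}(j-2)\dim V_j$ (Lemma \ref{regbound}). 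Surjectivity of $w_{\mathcal V}$ is checked fiberwise using Theorem \ref{regfiber}: every fiber $X_y\subseteq\P^{r-4}_y$ is $4$-regular, and only finitely many fail to be $3$-regular, which is exactly what allows one to take a single extra cubic generator ($\dim V_3=1$) and lose only one unit. There is no descending induction, no comparison of section rings of $X$ and $\overline{X}$, and no need for Serre/Kodaira vanishing on $\overline{X}$.

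Two further inaccuracies worth noting. First, for fibers meeting $X_{\Sing}$ the paper explicitly abandons length as the controlling invariant: Theorem \ref{p:01} is used to show such a fiber has at most two points in its support (one singular point plus possibly one reduced smooth point), and the Brian\c{c}on--Skoda bound on Loewy length (Lemma \ref{red=>loewy}, Proposition \ref{regfibsing3}) then gives $3$-regularity of the piece at the singular point; your phrasing ``finite fibers of controlled length'' misses that length alone is useless there. Second, the lost unit in $d-e+2$ is not the price of the singular fibers: it is already present in Kwak's smooth case, caused by the finitely many fibers consisting of four collinear reduced points in $X_{\reg}$ (condition $(\star)$ in the introduction is precisely what removes them and restores $d-e+1$).
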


Suppose furthermore that $X$ has only isolated rational singularities and
\begin{equation}\tag{$\star$}\label{techassump}
\dim \bigcup_{l \in \Sigma} l \leq 4, \text{ where } \Sigma:=\{\text{lines } l \subseteq \P^r \mid \length(l \cap X) \geq 4 \text{ and } l \cap X_{\Sing} = \emptyset  \}.
\end{equation}
In this case, our approach yields the optimal result
$$
\reg(X) \leq d-e+1.
$$
See Remark \ref{rem:furthercase} for more details.
Note that Ran \cite[Theorem 1]{R1}  proved that if $r \geq 9$ and $X$ has non-degenerate tangent variety, then the condition (\ref{techassump}) holds. As remarked in \cite[Remark 2.1]{R1}, threefolds with degenerate tangent variety is classified in \cite[(5.48)]{GH}.

The codimension two case is specially important as one of the counterexamples to regularity conjecture by McCullough-Peeva appeared in this case. The second main result of this paper is to establish a sharp regularity bound for codimension two normal projective threefolds with Cohen-Macaulay Du Bois singularities and to classify the extremal cases. This generalizes the main theorem of \cite{Kw}, which deals with the smooth case. Note that rational singularities are Cohen-Macaulay Du Bois singularities.

\begin{theorem}\label{main2}
Let $X \subseteq \P^5$ be a non-degenerate normal projective threefold of degree $d$. Suppose that $X$ has Cohen-Macaulay Du Bois singularities. Then we have
$$
\reg(X) \leq d-1.
$$
Furthermore, $\reg(X)=d-1$ if and only if $X \subseteq \P^5$ is either a variety of minimal degree or a complete intersection of type $(2,2)$.
\end{theorem}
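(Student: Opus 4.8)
The plan is to first establish the inequality $\reg(X) \le d-1$ as a consequence of Theorem \ref{main}, or rather its refinement, specialized to codimension $e=2$. Since $X \subseteq \P^5$ is a non-degenerate normal projective threefold with Cohen-Macaulay Du Bois singularities, I would use the codimension-two hypothesis to improve the generic projection argument underlying Theorem \ref{main}. The key point is that for $e=2$ a generic projection of $X$ to $\P^4$ is a hypersurface, whose regularity is $\deg = d$, so the control of $\reg(X)$ reduces to bounding the regularity of the fibers of the projection together with the secant/singular-locus contributions. I expect that in codimension two the fibers of a generic projection are short enough (length at most $2$ generically, by the double-point formula and the normality of $X$) that the Loewy-length bound forces the fiber contribution down by one, yielding $\reg(X) \le d-1$ instead of $d-e+2 = d$. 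The Cohen-Macaulay hypothesis is what lets us pass from $\reg(\mathcal{O}_X)$ to $\reg(X)$ cleanly via the surjectivity/normality statement recalled in the introduction, and the Du Bois hypothesis is what controls the cohomology of $\mathcal{O}_X(m)$ in the relevant range (Du Bois singularities give the vanishing $H^i(X, \mathcal{O}_X(-m)) = 0$ for $m \gg 0$ and $i < \dim X$ in the appropriate twisted sense, analogous to Kodaira-type vanishing).

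Next, for the classification of the extremal case $\reg(X) = d-1$, I would trace through the inequalities in the proof of the bound and identify exactly where equality can be forced. Equality $\reg(X) = d-1$ should force the generic projection to have no nontrivial fibers beyond the expected ones and force the secant-line locus through $X_{\Sing}$ to be as small as possible; combined with the numerical constraint this pins down $d$ or the Hilbert function. The standard dichotomy then emerges: either $X$ has minimal degree $d = e+1 = 3$ (a variety of minimal degree in $\P^5$: a rational normal scroll, a cone over the Veronese surface, or $\P^3$ itself re-embedded — but of these only the ones that are normal with Du Bois singularities survive, which is all of them since varieties of minimal degree have rational singularities), or $d$ is one larger than minimal degree and $X$ is arithmetically Cohen-Macaulay with a linear resolution truncated appropriately, which in codimension two with $\reg = d-1$ and the Cohen-Macaulay hypothesis forces a complete intersection of two quadrics, $d = 4$. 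To make this rigorous I would invoke the Eisenbud-Goto-type analysis of the "next to minimal degree" case: an arithmetically Cohen-Macaulay variety of codimension two with $\reg(X) = d-1$ and $\deg = e+2$ has Betti table forcing the defining ideal to be generated by two forms of degree two (since the Hilbert-Burch matrix must be $2 \times 1$ with entries of degree $2$), i.e. a $(2,2)$ complete intersection.

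The main obstacle I anticipate is the extremal classification rather than the bound itself: one must rule out all other configurations that are numerically consistent with $\reg(X) = d-1$. Concretely, the delicate step is showing that equality forces $X$ to be arithmetically Cohen-Macaulay (so that the Hilbert-Burch machinery applies) — a priori $X$ is only assumed Cohen-Macaulay as a scheme, not arithmetically Cohen-Macaulay. I would handle this by showing that in the extremal case the partial elimination ideals of the generic projection vanish in the relevant degrees, which forces $X \subseteq \P^5$ to be projectively normal and then, via the codimension-two hypothesis and the regularity value, arithmetically Cohen-Macaulay; alternatively, one can argue that $\reg(X) = d - 1$ combined with $H^2(X, \mathcal{O}_X(t)) = 0$ for $t \ge 0$ (a consequence of the Du Bois/Cohen-Macaulay hypotheses and Kodaira vanishing, since $\mathcal{O}_X(1)$ is ample) forces the intermediate cohomology of $\mathcal{I}_{X|\P^5}$ to vanish. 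Once arithmetic Cohen-Macaulayness is in hand, the structure theorem for codimension-two ACM subschemes reduces the problem to reading off the degrees in the Hilbert-Burch resolution, which is a short computation: $\reg = d-1$ together with $\operatorname{codim} = 2$ leaves only the minimal-degree resolution ($\mathcal{O}(-2) \oplus \mathcal{O}(-2) \to \mathcal{O}(-1)^2$ giving the scroll/Veronese-cone cases) and the $(2,2)$-complete-intersection resolution.
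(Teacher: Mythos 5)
Your proposal has a genuine gap in the mechanism that is supposed to improve the bound from $d-e+2=d$ down to $d-1$. You attribute the improvement to the fibers of the generic projection being ``short enough (length at most $2$ generically)'', but the regularity bound coming out of the projection method is governed by the \emph{worst} fibers, not the generic ones, and for Cohen--Macaulay Du Bois singularities the worst fibers can have Loewy length $3$ and are only guaranteed to be $5$-regular (Theorem \ref{regfiber}(2)); this by itself gives nothing better than $d-e+3+\binom{e+1}{3}$. The actual source of the improvement in the paper is entirely different: after splitting into the case where $X$ lies on a quadric (handled by citing \cite[Proposition 2.9(2)]{Kw}, which already yields $\reg(X)=\tfrac{d+1}{2}$ or the complete intersection analysis, and produces all the extremal examples) and the case $H^0(\P^5,\mathcal{I}_{X|\P^5}(2))=0$, one uses the \emph{second} inequality of Lemma \ref{regbound}, which requires $\reg(E^*)\leq -3$, i.e.\ that $X\subseteq\P^5$ is linearly normal and $H^1(X,\mathcal{O}_X)=0$. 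Linear normality is a nontrivial fact for codimension-two varieties of dimension $\geq 3$ (Proposition \ref{cod2=>linnorm}, via Severi's theorem on the Veronese surface), and $H^1(X,\mathcal{O}_X)=0$ follows from Fulton--Lazarsfeld simple connectedness together with the surjectivity of $H^1(X,\C)\to H^1(X,\mathcal{O}_X)$ for Du Bois singularities. Neither ingredient appears in your outline, and without them the bound $\reg(X)\leq d-2$ in the non-quadric case (hence $\leq d-1$ overall) does not follow.

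Your extremal classification is also built on a step you yourself flag as unresolved: proving that equality forces $X$ to be arithmetically Cohen--Macaulay so that Hilbert--Burch applies. The paper avoids this entirely -- since the non-quadric branch gives the strict improvement $\reg(X)\leq d-2$, all extremal cases must lie on a quadric, where Kwak's dichotomy (complete intersection of type $(2,2)$, or $\reg=\tfrac{d+1}{2}$ forcing $d=3$, a variety of minimal degree) reads off the answer with no ACM argument needed. If you want to salvage your route you would need to actually prove projective normality in the extremal case, which is at least as hard as the linear normality statement you are missing for the bound itself.
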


If $X \subseteq \P^r$ is a non-degenerate normal projective threefolds with Cohen-Macaulay Du Bois singularities of degree $d$ and codimension $e$, then we have
$$
\reg(X) \leq d-e+3 + {e+1 \choose 3}.
$$
See Remark \ref{rem:furthercase} for more details.

To prove Theorems \ref{main} and \ref{main2}, we take a \emph{general projection} 
$$
\pi \colon X \to \overline{X} \subseteq \P^4
$$
of a non-degenerate normal projective threefold $X$ in $\P^r$ to a hypersurface $\overline{X}$ in $\P^4$, and analyze the complexity of the fibers of $\pi$. It is known that if one has a smaller Castelnuovo-Mumford regularity bound for all fibers of $\pi$, then one would get a better Castelnuovo-Mumford regularity bound for $X$. See Section \ref{proofsec} for more details. This idea essentially goes back to Castelnuovo \cite{C} (see also  \cite{P}), and is combined with cohomological method by Lazarsfeld \cite{L} to show a sharp regularity bound for smooth surfaces. This technique has been successfully applied to lower dimensional smooth varieties as in \cite{Kw1, Kw, Kw2}, \cite{R} and singular surfaces as in \cite{Ni}.

Along this line, our task is to find a reasonably small uniform regularity bound for every fiber $X_y=\pi^{-1}(y)$ of general projection $\pi  \colon X \to \overline{X} \subseteq \P^4$ with $y \in \overline{X}$ (Theorem \ref{regfiber}). For this purpose, we essentially classify the fibers of $\pi$ (see Remark \ref{rmk}). The situation is well understood if $X$ is nonsingular; every $X_y$ has length at most $4$, and there are only finitely many $y$ with $X_y$ collinear of length $4$ (see \cite{Kw1}).  
However, if $X$ is singular, then the situation becomes more complicated and new ideas are essentially needed. When $X_y$ is supported in the smooth locus of $X$, we can control the length of $X_y$ and consequently the regularity of $X_y$ by dimension counting arguments and the (dimension+2)-secant lemma \cite[Theorem 1]{R} (Proposition \ref{mather}). When $X_y$ contains a singular point of $X$, the situation becomes quite delicate. It is worth pointing out that the length of $X_y$ is not useful anymore to bound its regularity at least in our case. To remedy this situation, we use \emph{Loewy length} instead to bound the regularity of fibers. 
Since $X$ has rational or Cohen-Macaulay Du Bois singularities, we can apply \emph{Brian\c{c}on-Skoda type theorem} to bound the Loewy length (see Lemma \ref{red=>loewy} and Proposition \ref{regfibsing3}). This is enough to establish the regularity bound for $X$ if $X$ has only isolated singularities.
However, a normal projective threefold with rational or Cohen-Macaulay Du Bois singularities could have positive dimensional singular locus, and this is actually the major obstruction of bounding regularity of $X$.
The main novelty of this paper is to overcome the difficulty by computing the dimension of the \emph{secant variety} swept out by secant lines passing through the singular locus of $X$ (see Theorem \ref{p:01}). This implies that the support of $X_y$ has at most two points (Proposition \ref{fibercurve}), and this result leads us to the desired control on the regularity of the fiber $X_y$. See Section \ref{fibersec} for more details.

The following theorem describes the secant variety swept out by secant lines passing through the singular locus for an arbitrary projective variety.  It might be of independent interest, and can be viewed as complementary to the results on secant variety of smooth locus (see \cite[Theorem 1]{R}, \cite[Theorem A.3]{BDELU}). We expect that it would have other applications to general projections of higher dimensional singular varieties.
 
\begin{theorem}\label{p:01}
Let $X \subseteq \P^r$ be a projective variety of dimension $n$, and  let $Z$ be a nonempty proper closed subset containing $X_{\Sing}$. Denote $\delta:=\codim_{X}Z$ and $U_Z:=X-Z$. Consider the set of secant lines
$$
\Sigma:=\{\text{lines } l \subseteq \P^r \mid \length(l \cap U_Z) \geq n+1-\delta \text{ and } l \cap Z \neq \emptyset  \}.
$$
Then we have
 $$\dim \overline{S_{\Sigma}(X)}\leq n+1,\text{ where }S_{\Sigma}(X):=\bigcup_{l \in \Sigma} l.$$
\end{theorem}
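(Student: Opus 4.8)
The plan is to bound $\dim \overline{S_\Sigma(X)}$ by a dimension count on an incidence variety, splitting $\Sigma$ according to whether a line in $\Sigma$ meets $X$ in a scheme of length $\geq n+1-\delta$ supported entirely in $U_Z$, or whether the $(n+1-\delta)$ points on $U_Z$ forming the secant condition force the line to lie in tangent-type positions. First I would set up the parameter space. Let $W \subseteq Z \times (U_Z)^{(n+1-\delta)}$ be the locally closed set of tuples $(z, p_1, \dots, p_{n+1-\delta})$ such that $z, p_1, \dots, p_{n+1-\delta}$ are collinear (so $z$ lies on the line spanned by the $p_i$, or on a limiting tangential flag when the $p_i$ are infinitely near). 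The secant lines in $\Sigma$ are swept out as the projection of a universal line bundle over $W$ (or its closure), so $\dim \overline{S_\Sigma(X)} \leq \dim W + 1 - (\text{number of free points a generic line of }\Sigma\text{ carries})$; more precisely, since a generic line $l \in \Sigma$ already carries $n+1-\delta$ prescribed points of $U_Z$ plus one point of $Z$, one expects the fiber of $W \to \Sigma$ over a generic $l$ to have dimension $(n+1-\delta)\cdot 1 + (\dim l \cap Z)$, so $\dim S_\Sigma(X) \leq \dim W + 1 - (n+1-\delta)$ when $l \cap Z$ is generically finite, and the count has to be adjusted (in our favor) when $l$ meets $Z$ in positive dimension. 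The key numerical input is $\dim W \leq \dim Z + (n+1-\delta) \cdot 1 - (\text{collinearity codimension})$. Here collinearity of $n+2-\delta$ points in $\P^r$ imposes, generically, codimension $(n-\delta)(r-1)$ inside $Z \times (U_Z)^{n+1-\delta}$ once we account that the line is determined by $2$ of them; but because $Z$ has dimension $n - \delta$ and each $p_i$ ranges in an $n$-dimensional variety, the honest estimate is $\dim W \leq (n-\delta) + 2 + (n-\delta)\cdot 0$ after imposing that each successive point lie on the already-determined line. Carrying this through gives $\dim W \leq n - \delta + 2 + (n+1-\delta) - 2 = 2(n-\delta) + 1$ contributions that do not obviously collapse, which is too weak; so the count must instead be organized by first choosing the line.

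The cleaner organization, which I would actually pursue, is to stratify $\Sigma$ by the line directly. Let $\mathbb{G} = \mathbb{G}(1, r)$ be the Grassmannian of lines, and consider the locally closed subset $\Sigma \subseteq \mathbb{G}$. For $l \in \Sigma$ the scheme $l \cap X$ has length $\geq n+1-\delta$ supported on $U_Z$, plus a point on $Z$. The variety $S_\Sigma(X)$ is the image of the universal line $\mathcal{L}_\Sigma \to \mathbb{P}^r$ with $\mathcal{L}_\Sigma \to \Sigma$ a $\mathbb{P}^1$-bundle, so $\dim \overline{S_\Sigma(X)} \leq \dim \Sigma + 1$, and it suffices to prove $\dim \Sigma \leq n$. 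To bound $\dim \Sigma$, I would use the incidence correspondence $I = \{(l, p) : l \in \Sigma,\ p \in l \cap Z\} \to \Sigma$, together with the first projection $I \to Z$: fixing a point $z \in Z$, the lines through $z$ that are $(n+1-\delta)$-secant to $U_Z = X \setminus Z$ form a variety whose dimension I can bound by projecting $U_Z$ from $z$ and invoking the $(\dim + 2)$-secant lemma of Ran \cite[Theorem 1]{R} — or rather its proof — applied to the projected variety $\overline{\pi_z(U_Z)} \subseteq \mathbb{P}^{r-1}$, which has dimension $n$ (or $n-1$ if the generic fiber of $\pi_z$ is positive-dimensional, in which case $\Sigma$ is even smaller). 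A generic such line corresponds to a point of $\overline{\pi_z(U_Z)}$ that is the image of $\geq n+1-\delta \geq 2$ points, i.e., a point of the "secant locus" of $\pi_z(U_Z)$; Ran's bound controls the locus of points through which pass too many secants, giving that the $(n+1-\delta)$-fold points sweep out a subvariety of $\mathbb{P}^{r-1}$ of dimension $\leq n - (n+1-\delta) + 1 = \delta$, hence the lines through $z$ in $\Sigma$ form a family of dimension $\leq \delta$. Therefore $\dim \Sigma \leq \dim Z + \delta = (n - \delta) + \delta = n$, which is exactly what is needed, and then $\dim \overline{S_\Sigma(X)} \leq n+1$.

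There are two places where care is required, and the main obstacle is the second. First, the fibration $I \to Z$ may fail to be generically finite over its image when a generic $l \in \Sigma$ meets $Z$ in a positive-dimensional set; but then fixing two points of $l \cap Z$ only strengthens the constraints, so one restricts to the stratum of $\Sigma$ with $l \cap Z$ of fixed dimension $k \geq 0$ and gains $k$ in the count while the base of relevant $z$'s drops correspondingly — a routine bookkeeping that does not change the bound. Second, and this is the real difficulty: the secant condition $\length(l \cap U_Z) \geq n+1-\delta$ is a scheme-theoretic length condition, so the "points" of $U_Z$ on $l$ may be infinitely near, and projection from $z$ of a scheme of length $n+1-\delta$ need not land in the classical higher-secant locus of $\pi_z(U_Z)$ — it could be governed by tangential or osculating behavior. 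To handle this I would replace Ran's secant lemma by its refinement (or the version in \cite[Theorem A.3]{BDELU}) that bounds the dimension of the locus swept by $(d+2)$-secant lines allowing length, i.e., the statement that for an $n$-dimensional variety $Y \subseteq \mathbb{P}^N$ the union of lines $l$ with $\length(l \cap Y) \geq n+2$ has dimension $\leq n+1$; applied to $Y = \overline{U_Z} \subseteq \mathbb{P}^r$ after the projection bookkeeping above, and combined with the presence of the extra incidence point on $Z$, this yields the claimed bound. The technical heart is thus checking that the length-version secant lemma is applicable on each stratum and that the contribution of the point on $Z$ correctly shaves the expected $n+1$ down by $\delta$ to land at $n$ before adding back the $+1$ from the ruling.
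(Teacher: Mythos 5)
Your reduction to bounding $\dim\Sigma$ inside the Grassmannian, and the final step $\dim \overline{S_{\Sigma}(X)} \leq \dim\Sigma + 1$, are fine, but the central claim of your second paragraph is a genuine gap. You assert that for a fixed $z\in Z$ the lines through $z$ meeting $U_Z$ in length $\geq n+1-\delta$ form a family of dimension at most $\delta$, by arguing that the $(n+1-\delta)$-fold point locus of the projection $\pi_z$ has the ``expected'' dimension $n-(n+1-\delta)+1=\delta$. Neither Ran's (dimension$+2$)-secant lemma \cite{R} nor \cite[Theorem A.3]{BDELU} says anything of this sort: those results bound the union of \emph{all} lines in $\P^r$ meeting the smooth locus in length at least $n+2$, and give no control over the multiple-point loci of a projection from a single, non-generic center $z$ lying on $X$ itself. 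For a fixed $z$ the $k$-fold locus of $\pi_z$ can have excess dimension (for instance $\pi_z|_{U_Z}$ may be generically $2$-to-$1$ onto its image, making the double-point locus all of $\pi_z(U_Z)$), so the product bound $\dim\Sigma \leq \dim Z + \delta$ does not follow. Note also that the intermediate statement you aim for, $\dim\Sigma\leq n$, is strictly stronger than the theorem, which only bounds the union $S_{\Sigma}(X)$ (through each point of which a large family of lines of $\Sigma$ may pass), and there is no reason it should hold in general.

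The difficulty you flag in your last paragraph --- that ``$\length(l\cap U_Z)\geq n+1-\delta$ \emph{and} $l\cap Z\neq\emptyset$'' must be treated as a single coupled condition, the incidence with $Z$ making up for the deficit in the length --- is exactly the heart of the matter, and your proposal does not supply an argument for it. The paper's proof proceeds by contradiction: assuming $\dim\overline{S_{\Sigma}(X)}\geq n+2$, it extracts an $(n+1)$-dimensional family $B$ of lines of $\Sigma$, pulls back $I_{X|\P^r}$ to the incidence variety $\Lambda_B$, and computes the $0$-th Fitting ideal of $\Omega^1_{\Lambda_B/\P^r}$ locally along each exceptional divisor $E_i$. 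Divisors with center inside $Z$ contribute $1+\delta_i$ and those with center off $Z$ contribute $2a_i-1+\delta_i\geq a_i$ to the degree of the resulting divisor $D$ on a general line $l\in B$, giving $\deg D\geq (n+1-\delta)+(1+\delta)=n+2$; on the other hand the Eagon--Northcott complex and the semistability of $\wedge^{n+1}A\otimes\sO_l(-(n+1))$ force $\deg D\leq n+1$, a contradiction. It is this local computation along the family of lines, not a dimension count on the secant loci of the individual projections $\pi_z$, that encodes the interaction between the two conditions, and that is what is missing from your argument.
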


It is worth noting that Theorems \ref{main} and \ref{main2} show a distinction between normal projective threefolds with rational or Cohen-Macaulay Du Bois singularities and highly singular projective threefolds. 
There are two geometric reasons for this phenomenon. First, normal projective threefolds with rational or Cohen-Macaulay Du Bois singularities satisfy \emph{Kodaira vanishing theorem}, so the cohomological method can be applied. Second, all fibers of generic projections of these varieties have reasonably small Loewy length, so the general projection method works. There do exist normal projective threefolds with rational or Cohen-Macaulay Du Bois singularities having one dimensional singular locus. The first secant variety $\Sigma_1=\Sigma_1(C, L)$ of a smooth projective curve $C$ of genus $g$ embedded by the complete linear system of a line bundle $L$ of degree $\geq 2g+3$ has rational singularities when $g=0$ and has normal Cohen-Macaulay Du Bois singularities when $g \geq 1$, and the singular locus of $\Sigma_{1}$ is $C$ (see \cite{ENP}). If $(g, \deg L) = (0, 5), (1,6), (2,7)$, then $\Sigma_1$ has codimension two. In this case, $\deg(\Sigma_1) = 6,9,13$ and $\reg(\Sigma_1) = 3, 5, 5$, respectively (cf. counterexample of threefold in codimension two \cite[Example 4.7]{MP}).

 \medskip

The  paper is organized as follows. We begin in Section \ref{secantsec} with the proof of Theorem \ref{p:01}. In Section \ref{singsec}, we study local properties of mild singularities using Loewy length.
Section \ref{fibersec} is devoted to the study of complexity of fibers of a generic projection. Finally, in Section \ref{proofsec}, by the classical generic projection method, we complete the proofs of Theorems \ref{main} and \ref{main2}.

\medskip

\noindent {\em Acknowledgment}. We would like to thank Lawrence Ein, Sijong Kwak and Lance Edward Miller for the inspiring suggestions and comments. We are grateful to the referee for helpful comments.

\section{Dimension of Secant Variety}\label{secantsec}

\noindent We give the proof of Theorem \ref{p:01} in this section. The theorem plays a crucial role in controlling regularity of fibers of a general projection passing through singular locus. 

\begin{proof} [Proof of Theorem \ref{p:01}]
We prove the theorem by contradiction.  Assume otherwise that $\dim \overline{S_{\Sigma}(X)}\geq n+2$. Denote by $\G=\G(\P^1, \P^r)$ the Grassmannian variety parameterizing lines in $\P^r$. We can find a subvariety $B_0 \subseteq \G$ of dimension $n+1$ parameterizing lines $l$ in $\Sigma$.  We  then obtain a diagram:
\begin{equation}\label{diagram}
	\xymatrix{
		X \subseteq \P^r& \Lambda_B \ar[l]_-{\pi} \ar[d]_{\phi}^{\P^1\text{-bundle}} & \\
		& B \ar[r] & B_0 \subseteq \G,
	}
\end{equation}
	where $B$ is a smooth projective variety of dimension $n+1$ mapping birationally to $B_0$, the morphism $\phi \colon \Lambda_B \to B$ is the pull-back to $B$ of the tautological $\P^1$-bundle on $\G$, and $\pi$ is the natural projection.
 Write
	$$I_{X|\P^r}\cdot \sO_{\Lambda_B}=\sO_{\Lambda_B}(-\sum a_iE_i-F)\otimes I_{W|\Lambda_B}$$
	where the ideal $I_{W|\Lambda_B}$ defines a subscheme $W$ with $\codim_{\Lambda_B} W\geq 2$ and $E_i$ and $F$ are divisors such that $E_i$ dominates $B$ but $F$ does not. Let
	$$Y_i:=C_X(E_i)=\pi(E_i), \ \delta_i:=\codim_X Y_i,\text{ and }e:=\codim_{\P^r}X.$$
	Note that the divisors $E_i$ can be  divided into two disjoint sets based on whether their centers are contained in $Z$ or not; the first set includes all $E_i$ with $C_X(E_i)\nsubseteq Z$ while the second one includes all $E_i$ with $C_X(E_i)\subseteq Z$. The latter one is not empty because $I_{X|\P^r} \subseteq I_{Z|\P^r}$ and all secant lines in $B$ touch $Z$, which in other words means that the subscheme defined by $I_{Z|\P^r} \cdot \sO_{\Lambda_B}$ dominates $B$. 
	
	We shall do the local calculation for each $E_i$. To be more concrete, let us consider the divisor $E_1$. Choose a general point $y\in E_1$ and set $x=\pi(y)$. We may assume that $x$ is a nonsingular point of $Y_1$. Hence we can choose a local system of parameters of $\sO_{\P^r,x}$ as
	$$\underbrace{x_1,\ldots,  x_e,x_{e+1},\ldots, x_{e+\delta_1}}_{\text{ local equations for }Y_1}, x_{e+\delta_1 + 1},\ldots, x_r$$
	and a local system of parameters of $\sO_{\Lambda_B,y}$ as
	$$y_1,\ldots, y_{n+2}$$
	such that $E_1$ is defined by $y_1$. Then, by assumption, one can write
	$$x_1=y^{b_1}_1w_1, \ldots, x_e=y^{b_1}_1w_e,\ x_{e+1}=y_1w_{e+1}, \ldots, x_{e+\delta_1}=y_1w_{e+\delta_1},$$
where $w_i$ are elements in the local ring $\sO_{\Lambda_B,y}$ and the value of $b_1$ is assigned as follows: if $C_X(E_1)\nsubseteq Z$, we can further assume that $x$ is a nonsingular point of $X$ so that $x_1,\ldots, x_e$ are the local equations for $X$, and we set $b_1=a_1$; if $C_X(E_1)\subseteq Z$, we simply set $b_1=1$.
	
From the morphism $\pi \colon \Lambda_{B} \to \P^r$ in the diagram (\ref{diagram}), we have an exact sequence
	\begin{equation}\label{eq:01}
	\pi^*\Omega_{\P^r}^1\longrightarrow \Omega_{\Lambda_B}^1\longrightarrow \Omega_{\Lambda_B/\P^r}^1\longrightarrow 0.
	\end{equation}
	We consider the Fitting ideal $\Fitt^0(\Omega_{\Lambda_B/\P^r}^1)$.
	Localizing the sequence (\ref{eq:01}) at $y$, one has
	$$\Omega_{\P^r,x}^1\stackrel{\varphi}{\longrightarrow}\Omega_{\Lambda_B,y}^1\longrightarrow \Omega_{\Lambda_B/\P^r,y}^1\longrightarrow 0.$$
	Choose the basis ${dx_i}$ for $\Omega_{\P^r,x}^1$ and the basis $dy_j$ for $\Omega_{\Lambda_B,y}^1$. Then the map $\varphi $ can be represented as an $(n+2)\times r$ matrix
	$$\varphi=
	\left( \begin{array}{cccc}
	\frac{\partial x_1}{\partial y_1} & \frac{\partial x_2}{\partial y_1} & \cdots &  \frac{\partial x_r}{\partial y_1} \\
	\frac{\partial x_1}{\partial y_2} & \frac{\partial x_2}{\partial y_2} & \cdots &  \frac{\partial x_r}{\partial y_2} \\
	\vdots & \vdots & \ddots & \vdots \\
	\frac{\partial x_1}{\partial y_{n+2}} & \frac{\partial x_2}{\partial y_{n+2}} &\cdots &  \frac{\partial x_r}{\partial y_{n+2}}
	\end{array} \right).
	$$
	We then see that the $0$-th Fitting ideal of $\Omega_{\Lambda_B/\P^r,y}^1$, which is generated by the $(n+2)\times (n+2)$ minors of $\varphi$,  has the form 	
	$$\Fitt^0(\Omega_{\Lambda_B/\P^r,y}^1)=y^{2b_1-1+\delta_1}_1\cdot J$$
	for some ideal $J$. Hence we conclude that $\Fitt^0(\Omega_{\Lambda_B/\P^r,y}^1)\subseteq  (y^{2b_1-1+\delta_1}_1)$.
	This argument works for all $E_i$, and therefore globally we have
	$$\Fitt^0(\Omega_{\Lambda_B/\P^r}^1)\subseteq \sO_{\Lambda_B}(-\sum (2b_i-1+\delta_i)E_i).$$
	Next we choose a general secant line $l\in B$. So the above inclusion shows that
	$$\Fitt^0(\Omega_{\Lambda_B/\P^r}^1)\cdot \sO_l\subseteq \sO_{\Lambda_B}(-\sum (2b_i-1+\delta_i)E_i)\cdot \sO_l.$$
	Let $D$ be the divisor on $l$ defined by $\Fitt^0(\Omega_{\Lambda_B/\P^r}^1)\cdot \sO_l$. Then the above inclusion shows that
	\begin{eqnarray*}
		\deg(D)& \geq & (\sum (2b_i-1+\delta_i)E_i)\cdot l\nonumber\\
		&=& \sum_{C_X(E_i)\nsubseteq Z} (2a_i-1+\delta_i)E_i\cdot l +\sum_{C_X(E_i)\subseteq Z} (1+\delta_i)E_i\cdot l \nonumber \\
		&=& \sum_{C_X(E_i)\nsubseteq Z} a_iE_i\cdot l +\sum_{C_X(E_i)\nsubseteq Z} (a_i-1+\delta_i)E_i\cdot l+\sum_{C_X(E_i)\subseteq Z} (1+\delta_i)E_i\cdot l \nonumber \\
		&\geq & \text{length}(l\cap U_Z)+\sum_{C_X(E_i)\subseteq Z} (1+\delta_i)E_i\cdot l \nonumber\\
		& \geq & (n+1-\delta) + (1+\delta) ~\,~\,~\, \text{ ~(recall: $\{E_i \mid C_X(E_i) \subseteq Z\} \neq \emptyset$)} \nonumber\\
		& = &  n+2. \nonumber
	\end{eqnarray*}
On the other hand, write $T$ as the tangent space of $B$ at $l$. There is a commutative diagram
	$$\begin{CD}
	0@>>>N^*_{l/\P^r} @>>> \Omega_{\P^r}^1|_l@>>> \Omega^1_l @>>>0\\
	@.@VVV @VVV @|\\
	0@>>>T^*\otimes \sO_l @>>> \Omega^1_{\Lambda_B}|_l@>>> \Omega^1_l @>>> 0.
	\end{CD}$$\\
	Note that $N^*_{l/\P^r}=A\otimes \sO_l(-1)$ where $A$ is an $(r-1)$-dimensional vector space. By the Snake Lemma, we obtain an  exact sequence
	$$A\otimes \sO_l(-1)\longrightarrow T^*\otimes \sO_l\longrightarrow \Omega_{\Lambda_B/\P^r}^1|_l\longrightarrow 0.$$
	The Eagon-Northcott complex gives an exact sequence
	$$\wedge^{n+1}A\otimes \sO_{l} (-(n+1))\longrightarrow \det T^*\otimes \sO_l\longrightarrow \sO_D\longrightarrow 0.$$
	The semistability of the vector bundle $\wedge^{n+1}A\otimes \sO_l (-(n+1))$ implies that $\deg (D)\leq n+1$. Hence combing all together, we have
	$$n+1\geq \deg(D)\geq n+2, $$
	which gives the desired contradiction.	
\end{proof}

\begin{remark} 
The proof is inspired by \cite[Theorem A.3]{BDELU}. However, instead of applying the generic projection argument as in \cite{BDELU}, our proof uses the Fitting ideal. 
In addition, Theorem \ref{p:01} does allow the case that $X_{\Sing}$ is empty, i.e., $X$ is nonsingular. In this case, $S_{\Sigma}$ is swept out by the secant lines passing through the nonempty proper closed subset $Z$.
\end{remark}

\section{Loewy Length and Mild Singularities}\label{singsec}

\noindent In this section, we study local properties of mild singularities. Especially, we bound the regularity of the intersection of the singularities with general linear spaces by using Loewy length (see Proposition \ref{regfibsing3}).

\begin{definition}
For an Artinian local ring $(A, \mathfrak{m})$, we define the \emph{Loewy length} $ll(A)$ of $A$ to be the nonnegative number $ll(A) := \max \{ i \mid \mathfrak{m}^i \neq 0 \}$. If $\mathfrak{m}=0$, i.e., $A$ is a field, then we put $ll(A)=0$.
\end{definition}

The Loewy length of an Artinian local ring is usually much smaller than its length. It provides an efficient way to bound the regularity of  zero dimensional schemes, as shown in the following.

\begin{theorem}[{\cite[Theorem 2.2]{Ni}}]\label{loewy-reg}
Let $X \subseteq \P^r$ be a zero dimensional subscheme supported at distinct closed points $p_1, \ldots, p_t$. For each $1 \leq i \leq t$, set $\mu_i:=ll({\mathcal{O}_{X, p_i}})$ to be the Loewy length of the local ring $\mathcal{O}_{X, p_i}$. Then $X$ is $(\mu_1 + \cdots  +\mu_t + t)$-regular.
\end{theorem}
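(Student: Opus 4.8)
The plan is to induct on the integer $N := \mu_1 + \cdots + \mu_t + t$, proving that $X$ is $N$-regular. Since $X$ is zero-dimensional, the structure sequence $0 \to \mathcal{I}_X \to \mathcal{O}_{\P^r} \to \mathcal{O}_X \to 0$ forces $H^i(\P^r, \mathcal{I}_X(j)) = 0$ for all $i \geq 2$ and all $j$, so it suffices to control $H^1$ (equivalently, the surjectivity of $H^0(\mathcal{O}_{\P^r}(m-1)) \to H^0(\mathcal{O}_X)$), although the hyperplane-section argument below will in fact annihilate all $H^i$ with $i \geq 1$ at once. The base of the induction is the one-point case: if $X$ is supported at a single point $p$ with $\mu = ll(\mathcal{O}_{X,p})$, then $X$ is $(\mu+1)$-regular. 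To see this I would choose coordinates with $p = [1:0:\cdots:0]$ and dehomogenize in the chart $\{x_0 \neq 0\}$; then $A := \mathcal{O}_{X,p}$, which equals $H^0(X,\mathcal{O}_X)$ since $X$ is a single point, is a quotient of $\C[x_1,\dots,x_r]$ by an ideal containing $\mathfrak{m}^{\mu+1}$ with $\mathfrak{m} = (x_1,\dots,x_r)$, and hence is spanned over $\C$ by images of monomials in $x_1,\dots,x_r$ of degree $\leq \mu$. Each such monomial of degree $a \leq \mu$ is the dehomogenization of the $(m-1)$-form $x_0^{\,m-1-a}$ times it whenever $m-1 \geq \mu$, so $H^0(\mathcal{O}_{\P^r}(m-1)) \to A$ is surjective for $m \geq \mu+1$.

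For the inductive step I would assume $t \geq 1$ (so $N \geq 1$) and pick a hyperplane $H$ with $p_1 \in H$ and $p_j \notin H$ for $j \geq 2$, with linear equation $\ell$, and pass to the residual scheme $X'$ (defined by $\mathcal{I}_{X'} := \mathcal{I}_X : \ell$) and the section $X'' := X \cap H \subseteq H \cong \P^{r-1}$. The ideal quotient by $\ell$ does nothing away from $p_1$, so $X'$ is supported in $\{p_1,\dots,p_t\}$ with the same local rings as $X$ at $p_2,\dots,p_t$, while $\mathcal{O}_{X',p_1} = \mathcal{O}_{X,p_1}/(0:\bar\ell)$ with $\bar\ell$ in the maximal ideal $\mathfrak{m}$ of $\mathcal{O}_{X,p_1}$; since $\mathfrak{m}^{\mu_1}\bar\ell \subseteq \mathfrak{m}^{\mu_1+1} = 0$, the Loewy length at $p_1$ drops to at most $\mu_1-1$ (and $p_1$ disappears if $\mu_1 = 0$). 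Thus the corresponding invariant $N(X')$ satisfies $N(X') \leq N-1$, so by induction $X'$ is $(N-1)$-regular. On the other hand $X''$ is supported at the single point $p_1$ with local ring a quotient of $\mathcal{O}_{X,p_1}$, hence of Loewy length $\leq \mu_1$, so $(\mu_1+1)$-regular by the one-point case; note also $N \geq \mu_1 + t \geq \mu_1+1$. Feeding this into the standard exact sequence
$$
0 \longrightarrow \mathcal{I}_{X'}(-1) \xrightarrow{\;\cdot\,\ell\;} \mathcal{I}_X \longrightarrow \mathcal{I}_{X''|H} \longrightarrow 0,
$$
twisting by $\mathcal{O}(N-i)$, and using $H^i(\mathcal{I}_{X'}(N-1-i)) = 0$ (from $(N-1)$-regularity of $X'$) together with $H^i(\mathcal{I}_{X''|H}(N-i)) = 0$ (from $(\mu_1+1)$-regularity of $X''$ and $N \geq \mu_1+1$) for each $i \geq 1$, I obtain $H^i(\mathcal{I}_X(N-i)) = 0$ for $i \geq 1$, i.e., $X$ is $N$-regular. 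The induction bottoms out at $N = 0$, where $X = \emptyset$ is trivially $0$-regular.

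I expect the crux to be the one-point case, powered by the elementary identity $\mathfrak{m}^{\mu}\cdot\mathfrak{m} = 0$: it is precisely this that forces the residual scheme $X'$ to have strictly smaller Loewy length, which is what drives the induction. The remaining points are bookkeeping, but one must be careful that the chosen $H$ really avoids $p_2,\dots,p_t$ and that the residual-versus-section decomposition is read off correctly at each $p_i$ (in particular that $X'$ acquires no new points). An alternative, slightly less self-contained route would be to observe that $X \subseteq Y := \bigsqcup_i V(\mathfrak{m}_{p_i}^{\mu_i+1})$, that for zero-dimensional schemes a subscheme has regularity no larger than the ambient scheme, that each fat point $V(\mathfrak{m}_{p_i}^{\mu_i+1})$ is $(\mu_i+1)$-regular (powers of an ideal generated by a regular sequence of linear forms have linear resolutions, or argue by a direct Hilbert-function count), and that $\reg(Y_1 \sqcup Y_2) \leq \reg(Y_1) + \reg(Y_2)$ for zero-dimensional schemes with disjoint support — the last step coming from multiplying a degree-$\reg(Y_1)$ form vanishing on $Y_1$ but a unit on $Y_2$ (available since $\mathcal{I}_{Y_1}(\reg Y_1)$ is globally generated) by a degree-$(\reg(Y_2)-1)$ form realizing a prescribed function on $Y_2$.
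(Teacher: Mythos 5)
Your argument is correct, and it is essentially the same proof as in the cited source \cite[Theorem 2.2]{Ni} (the present paper only quotes the result): one inducts on $\mu_1+\cdots+\mu_t+t$ via a hyperplane through a single support point, using the residual exact sequence $0 \to \mathcal{I}_{X'}(-1) \to \mathcal{I}_X \to \mathcal{I}_{X\cap H|H} \to 0$ together with the observation that $\mathfrak{m}^{\mu_1}\bar\ell=0$ forces the Loewy length at that point to drop. Your direct treatment of the one-point case and the bookkeeping $N(X')\leq N-1$ are exactly the needed ingredients, so there is nothing to add.
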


To bound the Loewy length, we need to consider a reduction of an ideal.

\begin{definition}
Let $(R, \mathfrak{m})$ be a local Noetherian ring. An ideal $J \subseteq \mathfrak{m}$ is called a \emph{reduction} of $\mathfrak{m}$ if $\mathfrak{m}^{k+1} = J \mathfrak{m}^k$ for some integer $k \geq 0$. Moreover, $J$ is called a {\em minimal reduction} if it is a reduction minimal with respect to inclusion.
\end{definition}

As the setting in the definition, if $J$ is a reduction of $\mathfrak{m}$, then $J$ always contains a minimal reduction (\cite[Theorem 8.3.5]{HS}). Furthermore, if $\dim R = n$ and $R/\mathfrak{m}=\C$, then there exists a nonempty Zariski open subset $U$ in the $n$-th Cartesian product $(\mathfrak{m}/\mathfrak{m}^2)^n$ of the cotangent space such that if $x_1, \ldots, x_n \in \mathfrak{m}$ with $(x_1 + \mathfrak{m}^2, \ldots, x_n + \mathfrak{m}^2) \in U$, then $(x_1, \ldots, x_n)$ is a reduction of $\mathfrak{m}$ (\cite[Theorem 8.6.6]{HS}).

\medskip

Turning to geometric setting, let $X \subseteq \P^r$ be a projective variety of dimension $n$, and $p \in X$ be a point. Let $L\subseteq\P^r$ be an $(r-k)$-dimensional linear subspace passing through $p$, and assume that $L$ is cut out by linear forms $l_1,\ldots, l_k$ on $\P^r$. Locally at the point $p$, each form $l_i$ gives an element $\bar{l}_i$ of $\mathfrak{m}_{X, p}$ via the quotient $\mathfrak{m}_{X,p}=\mathfrak{m}_{\P^r,p}/I_{X,p}$. Thus we obtain an ideal $(\bar{\l}_1, \ldots, \bar{l}_k) \subseteq \mathfrak{m}_{X, p}$ generated by the elements $\bar{l}_i$. This ideal is the localization of the ideal sheaf $\mathcal{I}_L\cdot \sO_X$ at $p$. We say that $L$ is a \emph{reduction linear subspace at $(X,p)$} or simply $L$ is a \emph{reduction at $(X,p)$} if the ideal $(\bar{l}_1, \ldots, \bar{l}_k)$ is a reduction of $\mathfrak{m}_{X, p}$. 
If $L$ is reduction at $(X,p)$, then the intersection $X\cap L$ has dimension zero at $p$ and $k\geq n$. For two linear subspaces $L, L'$ with $p \in L' \subseteq L$, if $L$ is a reduction at $(X,p)$, then so is $L'$.

\begin{lemma}\label{red-loewy}
Let $X \subseteq \P^r$ be a projective variety of dimension $n$, and $p \in X$ be a point. Suppose that $m$ is a positive integer such that  $\mathfrak{m}_{X, p}^m \subseteq J$ for any minimal reduction $J$ of $\mathfrak{m}_{X, p}$. If $L$ is a reduction linear subspace at $(X,p)$ in $\P^r$, then we have $ll({\mathcal{O}_{X \cap L, p}}) \leq m-1$.
\end{lemma}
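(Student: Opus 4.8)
The plan is to reduce the statement to a short chain of ideal containments, using only the definition of a reduction linear subspace and the standard fact that every reduction of the maximal ideal contains a minimal reduction. Write $R:=\mathcal{O}_{X,p}$ and $\mathfrak{m}:=\mathfrak{m}_{X,p}$. Let $l_1,\ldots,l_k$ be linear forms cutting out $L$, and let $\bar{l}_1,\ldots,\bar{l}_k\in\mathfrak{m}$ be their images under $\mathfrak{m}_{\P^r,p}\to\mathfrak{m}$. Set $I:=(\bar{l}_1,\ldots,\bar{l}_k)\subseteq\mathfrak{m}$. By the discussion preceding the lemma, $I$ is the localization of $\mathcal{I}_L\cdot\mathcal{O}_X$ at $p$, so that $\mathcal{O}_{X\cap L,p}=R/I$, and the Loewy length we must bound is $ll(R/I)=\max\{i\mid (\mathfrak{m}/I)^i\neq 0\}$, i.e.\ the problem is to show $\mathfrak{m}^m\subseteq I$.

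First I would record that $R/I$ is indeed Artinian local, so that its Loewy length is defined: since $L$ is a reduction at $(X,p)$, the ideal $I$ is a reduction of $\mathfrak{m}$, hence $\mathfrak{m}^{k'+1}=I\mathfrak{m}^{k'}\subseteq I$ for some $k'\geq 0$, giving $\sqrt{I}=\mathfrak{m}$ and thus $\dim R/I=0$. Next, I would invoke \cite[Theorem 8.3.5]{HS}: the reduction $I$ of $\mathfrak{m}$ contains a minimal reduction $J$ of $\mathfrak{m}$. Applying the hypothesis of the lemma to this particular $J$ yields $\mathfrak{m}^m\subseteq J\subseteq I$. Passing to the quotient $R/I$ gives $(\mathfrak{m}/I)^m=0$, hence $ll(\mathcal{O}_{X\cap L,p})=ll(R/I)\leq m-1$, which is exactly the claim.

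Since this is just a direct sequence of inclusions, there is no genuine obstacle. The only two points that require a little care — and that I would state explicitly — are (i) the identification $\mathcal{O}_{X\cap L,p}=R/I$ with $I$ generated by the localized defining linear forms of $L$ (already set up in the paragraph before the lemma), and (ii) matching the quantifier in the hypothesis "$\mathfrak{m}_{X,p}^m\subseteq J$ for \emph{any} minimal reduction $J$" with the assertion of \cite[Theorem 8.3.5]{HS} that produces \emph{one} minimal reduction $J$ sitting inside $I$; these fit together without issue.
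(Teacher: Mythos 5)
Your proposal is correct and follows essentially the same route as the paper: identify $\mathcal{O}_{X\cap L,p}=\mathcal{O}_{X,p}/(\bar{l}_1,\ldots,\bar{l}_k)$, note that this reduction ideal contains a minimal reduction by \cite[Theorem 8.3.5]{HS}, and apply the hypothesis to get $\mathfrak{m}_{X,p}^m\subseteq(\bar{l}_1,\ldots,\bar{l}_k)$. The extra remarks on Artinian-ness and the quantifier matching are fine but not needed beyond what the paper already records.
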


\begin{proof}
Let $l_1, \ldots, l_k$ be linear forms on $\P^r$ cutting out $L$. Locally at the point $p$, these linear forms generate an ideal  $(\bar{\l}_1, \ldots, \bar{l}_k)\subseteq \mathfrak{m}_{X, p}$  which is a reduction ideal of $\mathfrak{m}_{X, p}$, and hence, $\mathfrak{m}_{X, p}^m \subseteq (\bar{l}_1, \ldots, \bar{l}_k)$ since the latter contains a minimal reduction. As $\mathcal{O}_{X \cap L, p} = \mathcal{O}_{X, p}/(\bar{l}_1, \ldots, \bar{l}_k)$, we see immediately that $\mathfrak{m}_{X \cap L, p}^m=0$ as desired. 
\end{proof}

To apply the above lemma, we need to bound the number $m$. This number turns out to depend on the singularities of the local ring. Let us recall definitions of some mild singularities that provide us reasonable bounds on $m$.

\begin{definition}
Let $X$ be a projective variety. 
\begin{enumerate}[\indent$(1)$]
	\item We say that $X$ has \emph{rational singularities} if $X$ is normal and 
there exists a proper birational morphism $f \colon Y \to X$ from a smooth variety $Y$ such that $R^i f_* \mathcal{O}_Y=0$ for $i>0$. Note that $R^i f_* \mathcal{O}_Y=0$ for $i>0$ if and only if $X$ is locally Cohen-Macaulay and $f_* \omega_Y = \omega_X$ (\cite[Theorem 5.10]{KM}). 
	\item Let  $\underline{\Omega}_X^{\bullet}$ be the Deligne-Du Bois complex for $X$, which is a generalization of the de Rham complex for a nonsingular variety (see \cite[Chapter 6]{Ko2} for detail). We say that $X$ has \emph{Du Bois singularities} if the natural map
$$
\sO_X \longrightarrow \underline{\Omega}_X^{0}=Gr_{\text{filt}}^0 \underline{\Omega}_X^{\bullet}.
$$
is a quasi-isomorphism. 
\end{enumerate}
\end{definition}

Rational singularities contain a large range of important singularities considered in birational geometry such as terminal, canonical, or log terminal singularities (\cite[Theorem 5.22]{KM}). Du Bois singularities are another important singularities naturally appeared in birational geometry and Hodge theory.
Note that if $X$ has rational singularities or log canonical singularities, then $X$ has Du Bois singularities (\cite[Corollary 6.23 and Corollary 6.32]{Ko2}).
For further information about rational or Du Bois singularities and their relation with birational geometry, we refer to \cite{Ko2} and \cite{KM}.

\begin{lemma}\label{red=>loewy}
Let $X \subseteq \P^r$ be a normal projective threefold, $p \in X$ be a point, and $L$ be a reduction linear subspace at $(X, p)$ in $\P^r$. Then the following hold:
\begin{enumerate}[\indent$(1)$]
	\item If $X$ has rational singularities, then $ll({\mathcal{O}_{X \cap L, p}}) \leq 2$.
	\item If $X$ has Cohen-Macaulay Du Bois singularities, then $ll({\mathcal{O}_{X \cap L, p}}) \leq 3$.
\end{enumerate}
\end{lemma}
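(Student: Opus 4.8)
The plan is to reduce both assertions to a single numerical input: a bound on the exponent $m$ such that $\mathfrak{m}_{X,p}^m \subseteq J$ for every minimal reduction $J$ of $\mathfrak{m}_{X,p}$, and then invoke Lemma \ref{red-loewy}. Indeed, once we know $m \leq 3$ in the rational case and $m \leq 4$ in the Cohen-Macaulay Du Bois case, Lemma \ref{red-loewy} immediately gives $ll(\mathcal{O}_{X\cap L, p}) \leq m-1$, which is exactly $2$ and $3$ respectively. So the entire content of the lemma is the statement: if $(R,\mathfrak{m}) = \mathcal{O}_{X,p}$ is a three-dimensional normal local ring with rational (resp. Cohen-Macaulay Du Bois) singularity, and $J \subseteq \mathfrak{m}$ is a minimal reduction, then $\mathfrak{m}^3 \subseteq J$ (resp. $\mathfrak{m}^4 \subseteq J$).

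The mechanism for this is a Brian\c{c}on-Skoda type theorem, as flagged in the introduction. A minimal reduction $J$ of $\mathfrak{m}$ in a three-dimensional local ring is generated by a system of parameters $x_1, x_2, x_3$ (a general linear combination of generators of $\mathfrak{m}$), so $J$ is a parameter ideal and in particular $\overline{J} = \overline{\mathfrak{m}}$, i.e. $\overline{J^k} = \overline{\mathfrak{m}^k}$ for all $k$, where the bar denotes integral closure; and since $\mathfrak{m}^k \subseteq \overline{\mathfrak{m}^k} = \overline{J^k}$, it suffices to control $\overline{J^k}$ in terms of $J$. The Brian\c{c}on-Skoda theorem in the form due to Lipman-Tessier / Aberbach-Huneke / Huneke (for ideals generated by a system of parameters in a ring with mild singularities) asserts: for a Cohen-Macaulay local ring of dimension $n$ with rational singularities, $\overline{J^n} \subseteq J$ for every parameter ideal $J$; more precisely one has $\overline{J^{n+\ell-1}} \subseteq J^\ell$. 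For $n=3$ this yields $\mathfrak{m}^3 \subseteq \overline{\mathfrak{m}^3} = \overline{J^3} \subseteq J$. For the Du Bois case one loses one step: the Cohen-Macaulay Du Bois hypothesis gives a weaker statement, the analogue of Brian\c{c}on-Skoda holds only with an extra $+1$ in the exponent (this is the Du Bois analogue of the rational singularities Brian\c{c}on-Skoda bound, reflecting that Du Bois is the "$\ell=0$" Hodge-theoretic weakening of rational), so $\overline{J^{n+1}} \subseteq J$, giving $\mathfrak{m}^4 \subseteq \overline{J^4} \subseteq J$ when $n=3$.

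Concretely the steps I would carry out are: (i) reduce to a statement about a minimal reduction $J$ of the maximal ideal, using that every minimal reduction of $\mathfrak{m}$ in a $3$-dimensional ring is a parameter ideal and that by Lemma \ref{red-loewy} it is enough to bound the least $m$ with $\mathfrak{m}^m\subseteq J$; (ii) observe $\mathfrak{m}^k \subseteq \overline{\mathfrak{m}^k} = \overline{J^k}$ for all $k$, since $J$ is a reduction of $\mathfrak{m}$; (iii) quote the appropriate Brian\c{c}on-Skoda type containment for parameter ideals --- $\overline{J^3} \subseteq J$ when $X$ has rational singularities (here one uses that $X$ is Cohen-Macaulay, which holds automatically for rational singularities), and $\overline{J^4} \subseteq J$ when $X$ is Cohen-Macaulay Du Bois (using the Du Bois variant, for which one must be careful to cite a version valid for local rings, not just for the graded/projective setting, and which genuinely needs the Cohen-Macaulay hypothesis since parameter-ideal Brian\c{c}on-Skoda can fail without it); (iv) combine to get $\mathfrak{m}^3 \subseteq J$ respectively $\mathfrak{m}^4 \subseteq J$, and conclude by Lemma \ref{red-loewy}.

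The main obstacle is step (iii): locating and correctly invoking the Du Bois version of the Brian\c{c}on-Skoda theorem with the sharp exponent $n+1$ for parameter ideals. The rational-singularity case is classical (Lipman-Tessier, Aberbach-Huneke, Huneke-Swanson), but the Du Bois refinement is more delicate --- it rests on recent work connecting Du Bois singularities to the behaviour of adjoint/multiplier-type ideals and test ideals, and one must verify both that the Cohen-Macaulay hypothesis is what bridges the gap and that the exponent one gets is exactly $n+1$ rather than something larger (which would spoil the bound $ll \le 3$ and hence the application to Theorem \ref{main2}). A secondary, more routine point is the genericity: one must make sure the linear forms cutting out $L$ can be chosen so that their images generate a minimal reduction of $\mathfrak{m}_{X,p}$ --- but this is already packaged into the hypothesis that $L$ is a reduction linear subspace, together with the fact (recalled after the definition of reduction, via \cite[Theorem 8.3.5]{HS}) that any reduction contains a minimal reduction.
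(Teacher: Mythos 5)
Your proposal is correct and follows essentially the same route as the paper: pass to a minimal reduction $J$ of $\mathfrak{m}_{X,p}$, apply a Brian\c{c}on-Skoda type containment to get $\mathfrak{m}_{X,p}^3\subseteq J$ (rational case, via Lipman--Teissier, cf.\ Huneke--Watanabe) resp.\ $\mathfrak{m}_{X,p}^4\subseteq J$ (Cohen-Macaulay Du Bois case), and conclude by Lemma \ref{red-loewy}. The only gap you flag---the precise source for the Du Bois exponent $n+1$---is resolved in the paper by citing Shibata \cite[Lemma 3.5]{S}, which gives exactly $\mathfrak{m}^{n+1}\subseteq J$ for a minimal reduction $J$ in a normal Cohen-Macaulay Du Bois singularity.
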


\begin{proof} Let $J$ be a minimal reduction of the maximal ideal $\mathfrak{m}_{X, p}$ of the local ring ${\mathcal{O}_{X,p}}$ at $p$. If $(X, p)$ is a $3$-dimensional rational singularity, then it follows from Brian\c{c}on-Skoda type theorem (\cite{LT}, see also \cite[Theorem 3.2 (1)]{HW}) that $\mathfrak{m}_{X, p}^3 \subseteq J$. If $(X, p)$ is a $3$-dimensional normal Cohen-Macaulay Du Bois singularity, then \cite[Lemma 3.5]{S} implies that $\mathfrak{m}_{X, p}^4 \subseteq J$. Thus the assertions follow from Lemma \ref{red-loewy}.
\end{proof}

\begin{remark} 
For a surface $S$ with rational singularities or Gorenstein elliptic singularities and for a closed point $p \in S$, it has been showed in \cite{Ni} that the inequality $\mult_p S \leq \embdim_p S$ plays a crucial role in bounding the Loewy length of fibers of a general projection of $S$. Ideally, following the method used there, if one could show  $\mult_p X \leq \embdim_p X -1$ for a threefold $X$,  then Lemma \ref{red=>loewy} (1)  can be established. However, this inequality is only known for terminal singularities by \cite[Theorem 2.1]{Ka} (see also \cite[Proposition 3.10]{TW}) and for Gorenstein canonical singularities by \cite[Theorem 5.1]{HW}. In general, if $(X, p)$ is a rational singularity, then we only have $\mult_p X \leq {{\embdim_p X -1}\choose{\dim X-1}}$ which is sharp (see \cite[Theorem 3.1 and Example 4.1]{HW}).
\end{remark}

\begin{remark}\label{rem-surDB}
Let $S \subseteq \P^r$ be a non-degenerate normal projective surface of degree $d$ and codimension $e$. Suppose that $S$ has Du Bois singularities.
As in the proof of Lemma \ref{red=>loewy}, by applying \cite[Lemma 3.5]{S}, we can show that the Loewy length of any fiber of a general projection $\pi \colon S \to \overline{S} \subseteq \P^3$ is at most 2. Then the arguments in \cite[Section 3]{Ni} show that the regularity conjecture holds for $S$, i.e.,
$$
\reg(S) \leq d-e +1.
$$
Note that all singularities considered in \cite{Ni} are Du Bois singularities.
\end{remark}

In the rest of the paper, we frequently use the notion of (partial) flag varieties. For given $m$ integers $1\leq k_1<k_2<\cdots <k_m\leq r-1$, the \emph{(partial) flag variety} $\F(\P^{k_1}, \ldots, \P^{k_m}, \P^r)$  parameterizes the $m$-tuples $(L_1,L_2,\ldots, L_m)$ of the nested linear subspaces $L_1\subseteq L_2\subseteq \cdots\subseteq L_m$ in $\P^r$ with $\dim L_i=k_i$. For a closed point $p\in \P^r$, we also consider the subvariety
$$
\F(p, \P^{k_1}, \ldots, \P^{k_m}, \P^r):=\{ (L_1, \ldots, L_m) \in \F(\P^{k_1}, \ldots, \P^{k_m}, \P^r) \mid p \in L_1 \}
$$
of the flag variety $\F(\P^{k_1}, \ldots, \P^{k_m}, \P^r)$. There is a canonical isomorphism 
$$
\F(p, \P^{k_1}, \ldots, \P^{k_m}, \P^r) \simeq \F(\P^{k_1-1}, \ldots, \P^{k_m-1}, \P^{r-1}).
$$ 
In the special case when $m=1$, the flag variety $\F(\P^{k_1},\P^r)$ is just the Grassmannian variety $\G(\P^{k_1},\P^r)$, and therefore, $\F(p, \P^{k_1}, \P^r) \simeq \G(\P^{k_1-1}, \P^{r-1})$. 

\medskip

Now we are ready to prove the following main result of this section. The proposition plays an important role in controlling regularity of fibers of a general projection passing through singular locus. The essential idea of the proof is to use the classical dimension counting method.

\begin{proposition}\label{regfibsing3}
Let $X \subseteq \P^r$ be a normal projective threefold, and $\Lambda \subseteq \P^r$ be a general $(r-5)$-dimensional linear subspace $\Lambda$ in $\P^r$ disjoint from $X$. Then for every singular point $p \in X_{\Sing}$, the linear subspace $L_p= \langle \Lambda,  p \rangle$ in $\P^r$ spanned by $\Lambda$ and $p$ is a reduction at $(X,p)$. In particular, the following hold:
\begin{enumerate}[\indent$(1)$]
	\item Suppose that $X$ has rational singularities. Then $ll({\mathcal{O}_{X \cap L_p, p}}) \leq 2$. If furthermore $X \cap L_p$ supports only at $p$, then $X \cap L_p \subseteq L_p \simeq \P^{r-4}$ is $3$-regular.
	\item Suppose that $X$ has Cohen-Macaulay Du Bois singularities. Then $ll({\mathcal{O}_{X \cap L_p, p}}) \leq 3$. If furthermore $X \cap L_p$ supports only at $p$, then $X \cap L_p \subseteq L_p \simeq \P^{r-4}$ is $4$-regular.
\end{enumerate}
\end{proposition}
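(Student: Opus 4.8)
The key observation is that $X\cap L_p=\pi_\Lambda^{-1}(\pi_\Lambda(p))$ is precisely the fibre through $p$ of the general projection $\pi_\Lambda\colon X\to\overline X\subseteq\P^4$ from $\Lambda$. So the real content is the first assertion — that for a general $\Lambda$ every such fibre is, locally at a singular point $p$, cut out by a reduction of $\mathfrak m_{X,p}$ — and once it is established the rest is immediate: Lemma \ref{red=>loewy} gives $ll(\mathcal{O}_{X\cap L_p,p})\le 2$ (resp. $\le3$), and when $X\cap L_p$ is supported only at $p$, Theorem \ref{loewy-reg} with $t=1$ gives that $X\cap L_p\subseteq L_p\simeq\P^{r-4}$ is $3$-regular (resp. $4$-regular). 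The plan is to prove the reduction statement by the classical incidence-variety dimension count.

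A general $\Lambda\in\G:=\G(\P^{r-5},\P^r)$ is disjoint from $X$ (since $\dim X+\dim\Lambda=r-2<r$) and $\pi_\Lambda|_X$ is finite, so $X\cap L_p$ is automatically zero-dimensional at $p$; that is the easy part. Form the bad incidence locus
$$
\mathcal{B}:=\bigl\{(\Lambda,p)\in\G\times X_{\Sing}\ \big|\ L_p=\langle\Lambda,p\rangle\ \text{is not a reduction at}\ (X,p)\bigr\},
$$
so that it suffices to show $\dim\mathcal{B}<\dim\G$. Since $X$ is normal it is regular in codimension one, hence $\dim X_{\Sing}\le1$; projecting $\mathcal{B}$ to $X_{\Sing}$, it is therefore enough to prove: for a fixed $p$ on a positive-dimensional component of $X_{\Sing}$, the bad locus $\mathcal{B}_p:=\{\Lambda\mid L_p\ \text{not a reduction at}\ (X,p)\}$ has codimension $\ge2$ in $\G$; and for $p$ an isolated point of $X_{\Sing}$ (or a special point on a component of it), that $\mathcal{B}_p$ has codimension $\ge1$. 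The latter is routine: $\Lambda\mapsto\langle\Lambda,p\rangle$ is a surjective fibration onto the $(r-4)$-planes through $p$, and a general such plane is a reduction at $(X,p)$ by \cite[Theorem 8.6.6]{HS}, so $\mathcal{B}_p\subsetneq\G$.

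For the codimension-$2$ bound I would pass to the local ring and work in the cotangent space $V_p:=\mathfrak m_{X,p}/\mathfrak m_{X,p}^2$. Writing $W_p\subseteq\mathfrak m_{\P^r,p}/\mathfrak m^2_{\P^r,p}$ for the $4$-dimensional space of linear forms through $p$ cutting out $L_p$, and $\overline W_p$ for its image in $V_p$, a short Nakayama argument (a larger ideal containing a reduction is again a reduction, so it suffices that the classes $\bar l_i$ span a subspace $\overline W_p$ meeting the locus of minimal-reduction triples of \cite[Theorem 8.6.6]{HS}) shows that $L_p$ is a reduction at $(X,p)$ as soon as $\overline W_p$ contains such a triple. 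Thus $\mathcal{B}_p$ is contained in the preimage, under $\Lambda\mapsto\overline W_p$, of the closed set of $4$-dimensional subspaces of $V_p$ containing no minimal-reduction triple, and I would stratify by $\embdim_p X=\dim V_p$ and by $\dim\overline W_p$. When $\embdim_p X\le4$ the estimate is a short Schubert calculation: the bad $\overline W_p$ must either meet the kernel of $\mathfrak m_{\P^r,p}/\mathfrak m^2_{\P^r,p}\twoheadrightarrow V_p$ in dimension $\ge2$, or (when it meets it in dimension $1$) have image in a proper closed subset of the relevant Grassmannian of $3$-planes — each of codimension $\ge2$ in $\G$.

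The main obstacle is the case $\embdim_p X\ge5$ at a general point $p$ of a one-dimensional component of $X_{\Sing}$: there one must show that the locus of $4$-dimensional subspaces $\overline W\subseteq V_p$ failing to contain a minimal-reduction triple has codimension $\ge2$ in the Grassmannian $\G(4,V_p)$, not merely the obvious codimension $\ge1$. I expect this to follow by using that $\mathcal{O}_{X,p}$ is an excellent normal (and, in cases (1) and (2), Cohen--Macaulay) local domain, hence quasi-unmixed, so a parameter ideal $J$ is a reduction of $\mathfrak m_{X,p}$ exactly when $e(J)=\mult_p X$; then both the locus of non-parameter triples and the locus of triples with $e(J)>\mult_p X$ are codimension $\ge1$ in $V_p^{3}$ and, crucially, are ``spread out'' enough that no codimension-$1$ family of $4$-dimensional subspaces $\overline W$ has all of $\overline W^{3}$ inside them. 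Concretely, I would run this through the classical incidence/Schubert count for secant and tangent lines to $X$ at $p$ (in the spirit of \cite[Theorem 1]{R} and \cite[Theorem A.3]{BDELU}), possibly after first replacing $X$ by a general hyperplane section through $p$, which turns $p$ into an isolated surface singularity and makes the missing codimension visible. Granting this estimate, $\mathcal{B}\to\G$ is not dominant, a general $\Lambda$ works for every $p\in X_{\Sing}$ at once, and parts (1) and (2) follow as explained.
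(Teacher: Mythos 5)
Your overall strategy---an incidence-variety dimension count over $X_{\Sing}$, followed by Lemma \ref{red=>loewy} and Theorem \ref{loewy-reg}---is the same as the paper's, and your treatment of isolated singular points (codimension $\geq 1$ via \cite[Theorem 8.6.6]{HS}) is fine. The gap is that the one step carrying all the weight, namely that for $p$ on a one-dimensional component of $X_{\Sing}$ the bad locus $\mathcal{B}_p$ has codimension at least $2$, is not proved. You correctly isolate it as ``the main obstacle,'' but the paragraph meant to handle it is explicitly conjectural (``I expect this to follow\dots'', ``Granting this estimate\dots''): the multiplicity criterion $e(J)=\mult_p X$ for parameter ideals in a quasi-unmixed local ring tells you which triples are reductions, but it provides no mechanism for showing that the $4$-dimensional subspaces $\overline{W}$ with $\overline{W}^{3}$ entirely inside the non-reduction locus form a set of codimension $\geq 2$ rather than the obvious $\geq 1$; and a hyperplane section through the fixed point $p$ is not a general hyperplane section, so the genericity statements you would want to invoke do not apply to it. As written, your argument yields only codimension $\geq 1$, which is insufficient exactly when $\dim X_{\Sing}=1$.

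The missing idea is purely incidence-theoretic and needs no further input from the local ring. Since a linear subspace through $p$ contained in a reduction linear subspace is again a reduction, an $(r-4)$-plane $L'$ through $p$ fails to be a reduction only if \emph{every} $(r-3)$-plane $L$ with $p\in L'\subseteq L$ fails to be a reduction. By \cite[Theorem 8.6.6]{HS} the non-reduction $(r-3)$-planes form a proper closed subset $V_p\subseteq\F(p,\P^{r-3},\P^r)$, hence lie in a divisor $H$ (automatically ample, as this flag variety has Picard number one). In the correspondence $\F(p,\P^{r-4},\P^{r-3},\P^r)$ with projections $f,g$, the divisor $f^{-1}(H)$ surjects onto $\F(p,\P^{r-4},\P^r)$ with general fiber of dimension $2$ and cannot contain $g^{-1}(H')$ for any divisor $H'$; hence the locus of $L'$ whose entire $3$-dimensional family of extensions $g^{-1}(L')$ lies in $f^{-1}(H)$ has codimension $\geq 2$. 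This gives $\dim V_p'\leq 4r-18$ at every singular point, whence $\dim\bigcup_p V_p'\leq 4r-17$ and the final count $(4r-17)+(r-4)=5r-21<5r-20=\dim\G(\P^{r-5},\P^r)$ closes the proof. The same trick works verbatim in your cotangent-space formulation: a bad $4$-dimensional subspace of $\mathfrak{m}_{X,p}/\mathfrak{m}_{X,p}^2$ is one all of whose $3$-dimensional subspaces are bad, and the set of $4$-planes whose $\P^3$ of hyperplanes lies in a fixed divisor has codimension $2$ in the Grassmannian.
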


\begin{proof}
Let $p \in X_{\Sing}$ be a singular point of $X$.
Let $\mathfrak{m}=\mathfrak{m}_{X, p}$ be the maximal ideal of the local ring $\mathcal{O}_{X,p}$, and $\mathfrak{m}_{\P^r,p}$ be the maximal ideal of the local ring $\mathcal{O}_{\P^r,p}$.
Consider 
$$
V_p := \overline{\{ L \in \F(p, \P^{r-3}, \P^r) \mid L \text{ is not a reduction at $(X,p)$} \}} \subseteq \F(p, \P^{r-3}, \P^r).
$$
As $\mathfrak{m}=\mathfrak{m}_{\P^r,p}/I_{X,p}$, it is generated by the quotients of linear forms. By \cite[Theorem 8.6.6]{HS}, we can choose three general linear forms $l_1,l_2,l_3$ so that the $3$-tuple of their quotients $(\bar{l}_1,\bar{l}_2,\bar{l}_3)$ in the product $(\mathfrak{m}/\mathfrak{m}^2)^3$  lies in the Zariski open subset stated in \cite[Theorem 8.6.6]{HS}. Hence the linear space defined by $l_1,l_2,l_3$ is a  reduction at $(X,p)$. This suggests that $V_p$ is a proper closed subset of $\F(p, \P^{r-3}, \P^r)$. Thus $\dim V_p \leq \dim \F(p, \P^{r-3}, \P^r)-1 = 3r-10$. Let
$$
V_p' := \overline{\{ L' \in \F(p, \P^{r-4}, \P^r) \mid L' \text{ is not a reduction at $(X, p)$} \}} \subseteq \F(p, \P^{r-4}, \P^r).
$$
We claim that 
$$
\text{$\dim V_p' \leq 4r-18$, i.e., $\codim_{\F(p, \P^{r-4}, \P^r)} V_p' \geq 2$.}
$$
To see the claim, we consider the diagram of flag varieties
\[
\begin{split}
\xymatrix{
& \F(p, \P^{r-4}, \P^{r-3}, \P^r) \ar[dl]_-f \ar[dr]^-g& \\
\F(p,  \P^{r-3}, \P^r)  & & \F(p, \P^{r-4}, \P^r),
}
\end{split}
\]
where $f$ and $g$ are natural projections. The dimensions of any fibers of $f$ and $g$ are $r-4$ and $3$, respectively. Let 
$$
W:=\{ L' \in \F(p, \P^{r-4}, \P^r) \mid g^{-1}(L') \subseteq f^{-1}(V_p) \} \subseteq \F(p, \P^{r-4}, \P^r).
$$ 
It is clear that we have the inclusion $V_p' \subseteq W$. Thus to prove the claim, it is enough to show that $W$ has codimension at least $2$ in $\F(p, \P^{r-4}, \P^r)$. Now, we take a prime divisor $H$ on $\F(p, \P^{r-3}, \P^r)$ containing an irreducible component of $V_p$. Then $f^{-1}(H)$ is a prime divisor on $\F(p, \P^{r-4}, \P^{r-3}, \P^r)$. We show that the restricted morphism 
$$
u:=g|_{f^{-1}(H)}: f^{-1}(H)\rightarrow \F(p, \P^{r-4}, \P^r)
$$
is surjective. Indeed, for any point $L'\in \F(p, \P^{r-4}, \P^r)$,  the set $f(g^{-1}(L'))$ parameterizes $(r-3)$-dimensional linear subspaces that contain the $(r-4)$-dimensional subspace represented by $L'$. Thus $\dim f(g^{-1}(L')) = 3$. However, since every effective divisor on $\F(p, \P^{r-3}, \P^r)$ is very ample, $f(g^{-1}(L'))$ meets $H$. This implies that $g^{-1}(L')$ meets $f^{-1}(H)$. Thus $u$ is surjective as desired. 
Note that $\dim f^{-1}(H)=(3r-10)+(r-4)=4r-14$. So a general fiber of  $u$ has dimension $2$. Furthermore, for any prime divisor $H'$ on $\F(p, \P^{r-4}, \P^r)$, it is impossible that $g^{-1}(H') \subseteq f^{-1}(H)$. This means that the locus of points in $\F(p, \P^{r-4}, \P^r)$ over which the fiber of $u$ has dimension $3$ has codimension at least two. Notice that $g^{-1}(W) \subseteq f^{-1}(H)$ and $\dim g^{-1}(L') = 3$ for all $L' \in W$. Hence $\codim_{\F(p, \P^{r-4}, \P^r)} W  \geq 2$, so the claim follows.

Now each $V'_p$ discussed above is naturally a subset of the Grassmannian $\G(\P^{r-4}, \P^r)$. Let 
$$
V:=\bigcup_{p \in X_{\Sing}} V_p' \subseteq \G(\P^{r-4}, \P^r).
$$ 
Since $\dim X_{\Sing} \leq 1$, it follows from the claim above that $\dim V \leq 4r-17$. Consider the diagram of the flag varieties
\[
\begin{split}
\xymatrix{
& \F(\P^{r-5}, \P^{r-4}, \P^r) \ar[dl]_-{f'} \ar[dr]^-{g'}& \\
\G(\P^{r-4}, \P^r) & & \G(\P^{r-5},  \P^r)
}
\end{split}
\]
with the natural projections $f'$ and $g'$. The dimensions of any fibers of $f'$ and $g'$ are $r-4$ and $4$, respectively. Then $\dim f'^{-1}(V) \leq  (4r-17)+(r-4)=5r-21$. Since $\dim \G(\P^{r-5}, \P^r)=5r-20$, we have $g'^{-1}(\Lambda) \cap f'^{-1}(V) = \emptyset$ for a general member $\Lambda \in \G(\P^{r-5}, \P^r)$. Therefore, for every singular point $p \in X_{\Sing}$, the linear subspace $L_p=\langle p, \Lambda \rangle$ is a reduction at $p$. The remaining assertions now follow from Lemma \ref{red=>loewy} and Theorem \ref{loewy-reg}.
\end{proof}

\section{Complexity of Fibers of a Generic Projection}\label{fibersec}

\noindent This section is devoted to the study of the fibers of generic projections of threefolds. Recall the construction of a generic projection for a variety. Let $X \subseteq \P^r$ be a non-degenerate projective variety of dimension $n$. Take an  $(r-n-2)$-dimensional linear subspace $\Lambda$ in $\P^r$ disjoint with $X$. Blowing up $\P^r$ along the center $\Lambda$ and then projecting to $\P^{n+1}$,
we obtain the diagram
$$
\begin{CD}
\Bl_{\Lambda}\P^r @>q>> \P^{n+1}\\
@VpVV  \\
\P^r.
\end{CD}
$$
The variety $X$ is naturally a subvariety of $\Bl_{\Lambda}\P^r$. Restricting $q$ to $X$ yields the morphism $\pi \colon X\rightarrow \P^{n+1}$ which is called a \emph{projection of $X$ from the center $\Lambda$}. We denote the image of $\pi$ as $\overline{X}$ which is a hypersurface in $\P^{n+1}$. For $y \in \overline{X}$, denote by $X_y:=\pi^{-1}(y)$ the fiber of the projection and denote by $$\P_y^{r-n-1}:=q^{-1}(y)=\langle \Lambda, X_y \rangle \simeq \P^{r-n-1}$$ the fiber of $q$ which is an $(r-n-1)$-dimensional linear subspace in $\P^r$. Note  that scheme-theoretically $X_y=X\cap \P_y^{r-n-1}$ and therefore $X_y\subseteq \P_y^{r-n-1}$ is a closed subscheme. As there are flexibility on the choice of the projection center $\Lambda$, we expect the projection would have some good properties. We frequently say that a general projection $\pi$ satisfies a property $\sP$, which means that there exists an open set $U$ in the Grassmannian $\G(\P^{r-n-2},\P^r)$ such that for $\Lambda\in U$ the projection from $\Lambda$ satisfies the property $\sP$. Of course, such open set $U$ depends on what kind of property $\sP$ is involved, and it should be clear from the context.

\medskip

The main goal of this section is to understand the complexity of a fiber $X_y$ of a general projection of a threefold $X$, especially the regularity of $X_y$. 
We first consider the case that $X_y$ is supported in the smooth locus $X_{\reg}$ of $X$. Recall that a zero-dimensional scheme $X$ is called \emph{curvilinear} if each local ring $\mathcal{O}_{X, x}$ is isomorphic to $\C[x]/(x^k)$ for some $k \geq 1$. It is clear that a dimension zero scheme is curvilinear if and only if it admits an embedding into a smooth curve.

\begin{proposition}\label{mather}\label{mathercor}
Let $X \subseteq \P^r$ be a projective threefold and $\pi \colon X \to \overline{X} \subseteq \P^{4}$ be a general projection from an $(r-5)$-dimensional linear subspace $\Lambda \subseteq \P^r$. Then the following hold:
\begin{enumerate}[\indent$(1)$]
	\item $X_y\cap X_{\reg}$ is curvilinear for all $y \in \overline{X}$.
	\item If a fiber $X_y$ over $y \in \overline{X}$ is supported in $X_{\reg}$, then $X_y$ has length at most $4$. In particular, $X_y \subseteq \P^{r-4}_y$ is $4$-regular.
	\item There are finitely many points $y\in \overline{X}$ such that $X_y$ is supported in $X_{\reg}$ and $X_y \subseteq \P^{r-4}_y$ is not $3$-regular; in this case $X_y$ consists of collinear four reduced points.
\end{enumerate}
\end{proposition}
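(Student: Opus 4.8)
The plan is to exploit the classical fact that for a general projection of a smooth variety, the fibers are governed by the multisecant/tangency behaviour of $X$, and to package this via Mather's genericity results (or equivalently the generic projection theorem of Mather--Gabrielov, used here in the form popularized by Ran \cite{R} and Kwak \cite{Kw1}). First I would reduce everything to a statement about secant and tangent lines: since $X_y = X \cap \P^{r-n-1}_y$ and the linear space $\P^{r-n-1}_y = \langle \Lambda, X_y\rangle$ has dimension $r-4$, a length $\ell$ subscheme of $X_{\reg}$ appearing in a fiber of $\pi$ forces the existence of a line through $\Lambda$ meeting $X$ in a scheme of length $\ell$ supported in $X_{\reg}$ (after cutting down $\Lambda$ appropriately), so the statement follows from estimating the dimension of the varieties swept out by such lines. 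Concretely, the relevant input is the $(\dim+2)$-secant lemma of \cite[Theorem~1]{R} (together with \cite[Theorem A.3]{BDELU}): for a general choice of center, no length $n+2 = 5$ subscheme of $X_{\reg}$ can lie on a line through a general point, hence cannot appear in a fiber, giving the length $\le 4$ bound in (2). For (1), the curvilinearity of $X_y \cap X_{\reg}$ is the standard genericity statement that a general projection of a smooth variety separates tangent directions to second order at a general point and only fails to be an immersion along a locus of the expected (small) dimension; this is precisely the generic projection lemma as in \cite{Kw1} (see also the discussion after \cite[Theorem~1]{R}).

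For part (3), once (1) and (2) are in hand, the only fibers supported in $X_{\reg}$ that fail to be $3$-regular are, by Theorem \ref{loewy-reg} (or a direct check on zero-dimensional schemes of length $\le 4$ in linear general position), exactly those of length $4$ that are \emph{collinear}: a length $\le 3$ scheme, or a length $4$ scheme in linearly general position, spans at least a $\P^3$ and is $3$-regular, whereas four collinear reduced points (or a curvilinear length-$4$ scheme on a line) on a line have regularity $4$. So it remains to show that there are only finitely many $y \in \overline{X}$ with $X_y$ consisting of four collinear points in $X_{\reg}$, and simultaneously that for such $y$ the scheme $X_y$ is in fact reduced. The dimension count here is the heart of the matter: the variety of $4$-secant lines to a smooth threefold $X \subseteq \P^r$ has dimension at most $n+1 = 4$ (this is again \cite[Theorem~1]{R}, the case $h=4$, $\dim X = 3$: the locus swept out by $4$-secant lines has dimension $\le n+1$), so such lines sweep out at most a $4$-dimensional subvariety of $\P^r$; intersecting with a general $\P^{r-4}_y$ and varying over the $3$-dimensional base $\overline{X}$ shows the bad locus in $\overline{X}$ is finite. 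The reducedness claim follows because a non-reduced length-$4$ collinear scheme on a line is curvilinear by (1), hence contained in a smooth curve; the locus of tangent-type degenerations (lines meeting $X$ in a curvilinear non-reduced scheme of length $4$) has strictly smaller dimension than the $4$-secant locus, so for a general center it contributes nothing to the fibers, leaving only honest reduced $4$-secant lines.

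The main obstacle I anticipate is the bookkeeping in part (3): one must carefully distinguish, among length-$4$ subschemes of a line, the reduced ones from the various curvilinear degenerations (two points with an embedded tangent direction, a single fat point of length $4$, etc.), and verify that each degenerate stratum is swept out by a family of lines of dimension $\le n = 3$ rather than $n+1 = 4$, so that after intersecting with the general $\P^{r-4}_y$ it disappears for a general center. This requires invoking the genericity of $\Lambda$ once more — essentially the same generic projection input as in (1), applied now to the incidence variety of (line, subscheme)-pairs — and matching dimension counts in the appropriate flag variety $\F(\P^1, \P^{r-4}, \P^r)$ as in Section \ref{singsec}. Everything else is either a direct citation of \cite[Theorem~1]{R} and \cite[Theorem A.3]{BDELU} or an elementary computation with zero-dimensional schemes and Theorem \ref{loewy-reg}.
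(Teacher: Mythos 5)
You follow the paper's strategy in outline, but your dimension count in part (3) is wrong in a way that matters. You claim that the union of the $4$-secant lines of $X$ has dimension at most $n+1=4$, citing \cite[Theorem 1]{R}; but Ran's lemma bounds only the locus swept out by $(n+2)$-secant lines, i.e.\ $5$-secant lines when $n=3$. For $4$-secant lines the provable bound is $n+2=5$, which is \cite[Theorem A.4]{BDELU} and is what the paper actually invokes. Your claimed bound of $4$ is essentially the condition (\ref{techassump}) from the Introduction, which the paper treats as an \emph{additional hypothesis} (known by \cite[Theorem 1]{R1} only for $r\geq 9$ and non-degenerate tangent variety) and which, when it holds, upgrades the conclusion to $\reg(X)\leq d-e+1$; it is not a theorem in general. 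With the correct bound of $5$, the finiteness argument runs as follows: a collinear fiber $X_y$ spans a line $l$ with $\P^{r-4}_y=\langle \Lambda, l\rangle$, so $l$ must meet the hyperplane $\Lambda\subseteq\P^{r-4}_y$; since a general $\Lambda\simeq\P^{r-5}$ meets the $5$-dimensional $4$-secant locus in dimension $\leq 0$, only finitely many such fibers occur. Your version (intersecting a $4$-dimensional locus with $\Lambda$) would instead give the empty set, i.e.\ prove too much. The same off-by-one error appears in your closing paragraph, where you ask the degenerate strata to have dimension $\leq 3$ rather than $\leq 4$.

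A smaller problem is the reduction in part (2): a length-$5$ fiber lies in the $(r-4)$-plane $\P^{r-4}_y$ and need not lie on any line, so the claim that a length-$\ell$ subscheme in a fiber ``forces the existence of a line through $\Lambda$ meeting $X$ in a scheme of length $\ell$'' is false as stated. The paper instead applies the $(\dim+2)$-secant lemma directly to the family of $(r-4)$-planes containing a general $(r-5)$-plane, after a flag-variety argument showing that such planes are dense in $\G(\P^{r-4},\P^r)$, and after using the curvilinearity from (1). Part (1) and the reduction of non-$3$-regularity to the collinear length-$4$ case (via \cite[Proposition 2.1]{Kw2}) are consistent with the paper, as is your genericity argument for the reducedness of the collinear fibers.
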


\begin{proof} (1) For $x\in X_{\reg}$, denote by $T_x$ the projective tangent space of $X$ at $x$ in $\P^r$. 
We need to prove that $\dim (T_x \cap \langle x, \Lambda \rangle ) \leq 1$ for every $x \in X_{\reg}$. It is enough to prove the following claim:
$$
\text{$\dim (T_x\cap \Lambda ) \leq 0$ for every $x\in X_{\reg}$.}
$$
To show the claim, we consider the diagram of flag varieties
\[
\begin{split}
\xymatrix{
& \F(\P^1, \P^{r-5}, \P^r) \ar[dl]_-{f} \ar[dr]^-{g}& \\
 \G(\P^1, \P^r) & & \G(\P^{r-5}, \P^r)
}
\end{split}
\]
with natural projections $f$ and $g$.
Let $T:=\overline{\bigcup_{x \in X_{\reg}} T_x} \subseteq \P^r$ be the variety swept out by tangent spaces, and $V_T \subseteq \G(\P^1, \P^r)$ be the closure of the set parameterizing lines in $T_x$ for $x\in X_{\reg}$. Note that $\dim V_T \leq 7$. Let
$$
W_T:=g(f^{-1}(V_T))=\overline{\{\Lambda\in \G(\P^{r-5},\P^r)\mid \dim (T_x \cap \Lambda) \geq 1~ \text{for some } x\in X_{\reg} \}} \subseteq \G(\P^{r-5}, \P^r).
$$
Since every fiber of $f$ has dimension $5(r-6)$, it follows that $\dim f^{-1}(V_T) \leq 5r-23$ so that $\dim W_T \leq 5r-23$. Thus $\codim_{\G(\P^{r-5}, \P^r)} W_T \geq 3$. This proves the claim, so finishes the proof of $(1)$.

\smallskip

\noindent (2) We first show that
there exists a dense open subset $U \subseteq \G(\P^{r-4}, \P^r)$ parametrizing $(r-4)$-dimensional linear subspaces of $\P^r$ containing a general  $(r-5)$-dimensional linear subspace of $\P^r$.
To see this, consider the diagram of  flag varieties
\[
\begin{split}
\xymatrix{
& \F(\P^{r-5}, \P^{r-4}, \P^r) \ar[dl]_-f \ar[dr]^-g& \\
\G(\P^{r-5}, \P^r) & &\G(\P^{r-4}, \P^r)
}
\end{split}
\]
where $f$ and $g$ are natural projections. Let $V \subseteq \G(\P^{r-5}, \P^r)$ be a closed subset whose complement $ \G(\P^{r-5}, \P^r) \setminus V$ consists of general $(r-5)$-dimensional linear subspaces of $\P^r$. Let 
$$
V':=\{ L \in \G(\P^{r-4}, \P^r) \mid g^{-1}(L) \subseteq f^{-1}(V) \}
$$
be the close subset of  $\G(\P^{r-4}, \P^r)$.
Following the similar arguments in the proof of Proposition \ref{regfibsing3}, we can show that $V'$ has positive codimension in $\G(\P^{r-4}, \P^r)$. Then $U:=\G(\P^{r-4}, \P^r) \setminus V'$ is the desired dense open subset of $\G(\P^{r-4}, \P^r)$.

We now consider a closed subset
$$
\begin{array}{rcl}
W&:=&\overline{\{L \in \G(\P^{r-4}, \P^r)  \mid L \cap X \text{ is a finite scheme supported in $X_{\reg}$ and length}(L \cap X) \geq 5 \}}\\
& \subseteq &\G(\P^{r-4}, \P^r).
\end{array}
$$
We shall show 
$$
U\cap W=\emptyset.
$$
To derive a contradiction, suppose that $U \cap W \neq \emptyset$.
Take an irreducible component $Y$ of $\overline{U \cap W}$, and give a reduced scheme structure on $Y$. Then the variety $Y$ parametrizes a family $\{ L_y \}$ of $(r-4)$-dimensional linear subspaces of $\P^r$ such that a general $L_y$ contains a general $(r-5)$-dimensional linear subspace of $\P^r$  disjoint from $X$ such that $L_y \cap X$ is a finite scheme supported in $X_{\reg}$ and $\length(L_y \cap X) \geq 5$. 
By (1), $L_y \cap X$ is curvilinear.
However, the (dimension+2)-secant lemma (\cite[Theorem 1]{R}) says that a general $(r-5)$-dimensional linear subspace of $\P^r$ is not contained in any $L_y$. Thus we get a contradiction, and hence, $U \cap W =\emptyset$. Now, notice that $\langle x, \Lambda \rangle \in U$ for all $x \in X$. 
It then immediately follows that every fiber of a general projection $\pi \colon X \to \overline{X} \subseteq \P^4$ supported in $X_{\reg}$ has length at most $4$ and is $4$-regular.

\smallskip

\noindent (3) Suppose that $X_y$ is supported in $X_{\reg}$. If the length of $X_y$ is at most $3$ or is not contained in a line, then $X_y \subseteq \P_y^{r-4}$ is $3$-regular by \cite[Proposition 2.1]{Kw2}. Thus if $X_y \subseteq \P_y^{r-4}$ is not $3$-regular, then $X_y$ should be collinear of length $4$. 

By \cite[Theorem A.4]{BDELU}, the $4$-secant variety $\Sec^4 X_{\reg}$ has dimension $\leq 5$. Hence it intersects a general $(r-5)$-dimensional linear subspace $\Lambda$ in dimension $\leq 0$, which proves the finiteness of the fiber $X_y$ supported in $X_{\reg}$ such that $X_y \subseteq \P_y^{r-4}$ is not $3$-regular. Note that if $l$ is a general $4$-secant line to $X$, then $l \cap X$ consists of four distinct reduced points. This implies the last assertion.
\end{proof}

We now turn to consider the case that $X_y$ meets the singular locus of $X$.
The major difficulty of controlling the complexity of $X_y$ lies in this case, and we apply Theorem \ref{p:01} to overcome this difficulty.

\begin{proposition}\label{fibercurve}
Let $X \subseteq \P^r$ be a normal projective threefold, and $\pi \colon X \to \overline{X} \subseteq \P^4$ be a general projection from an $(r-5)$-dimensional linear subspace $\Lambda \subseteq \P^r$. 
Then the following hold:
\begin{enumerate}[\indent$(1)$]
	\item $X_y$ is supported only at $p$ for all $p \in X_{\Sing}$ with $y =\pi(p)$ except finitely many points $p_1, \ldots, p_k \in X_{\Sing}$. The exceptional case only occurs when $\dim X_{\Sing} = 1$.
	\item If $p$ is one such point $p_i$, then the fiber $X_y$ over $y=\pi(p)$ can be decomposed as two disjoint subschemes $Z$ and $q$ such that $Z$ is supported at $p$ and $q$ is a reduced point supported in $X_{\reg}$. 
\end{enumerate}
\end{proposition}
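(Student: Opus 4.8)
The plan is to translate the two assertions into statements about secant lines through $X_{\Sing}$ and tangent spaces along $X_{\reg}$, and then to combine Theorem~\ref{p:01} with a few dimension counts of incidence varieties over the Grassmannian $\G(\P^{r-5},\P^r)$. Throughout I will use two elementary observations: (i) two distinct points $x,x'\in X$ lie in a common fiber of $\pi$ exactly when the line $\overline{xx'}$ meets the center $\Lambda$; and (ii) for $x'\in X_{\reg}$, the finite scheme $X_{\pi(x')}=X\cap\langle\Lambda,x'\rangle$ is reduced at $x'$ exactly when the projective tangent space $T_{x'}$ is disjoint from $\Lambda$ (equivalently, $\pi$ is unramified at $x'$). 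We may assume $X_{\Sing}\neq\emptyset$, and normality gives $\dim X_{\Sing}\leq 1$. Put $Z:=X_{\Sing}$ and $\delta:=\codim_XZ\in\{2,3\}$, and let $\Sigma$ and $S_\Sigma(X)$ be as in Theorem~\ref{p:01}. Since $\dim\overline{S_\Sigma(X)}\leq n+1=4$, a general $(r-5)$-plane $\Lambda$ is disjoint from $\overline{S_\Sigma(X)}$; in particular, \emph{no line of $\Sigma$ meets $\Lambda$}. Henceforth $\Lambda$ is general with respect to this and to the finitely many further open conditions introduced below.

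First I would record two facts. \textit{(A)} No fiber of $\pi$ contains two distinct singular points: the secant lines joining two points of $X_{\Sing}$ sweep out a variety of dimension $\leq 2\dim X_{\Sing}+1\leq 3$, which a general $(r-5)$-plane avoids. \textit{(B)} If $\dim X_{\Sing}=0$, then $X_{\pi(p)}$ is set-theoretically $\{p\}$ for every $p\in X_{\Sing}$: any further point would be smooth by (A), and the line $l$ joining it to $p$ would satisfy $l\cap Z\neq\emptyset$ and $\length(l\cap U_Z)\geq 1$ $(=n+1-\delta)$, hence $l\in\Sigma$ --- contradicting $l\cap\Lambda\neq\emptyset$. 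So when $\dim X_{\Sing}=0$ there are no exceptional points at all, and the proposition holds.

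Now assume $\dim X_{\Sing}=1$, so $\delta=2$. \textit{(C)} For each $p\in X_{\Sing}$, the support of $X_{\pi(p)}$ is $p$ together with at most one smooth point. Any support point is smooth by (A). If $p$ and two smooth points $p'_1,p'_2$ of $X_{\pi(p)}$ were collinear on a line $l$, then $l\not\subseteq X$ (else $X_{\pi(p)}$ would be infinite), while $l\cap Z\neq\emptyset$ and $\length(l\cap U_Z)\geq 2$ $(=n+1-\delta)$; thus $l\in\Sigma$, contradicting $l\cap\Lambda\neq\emptyset$. If instead $\{p,p'_1,p'_2\}$ spanned a plane $\Pi$, then $p'_1,p'_2\in\langle\Lambda,p\rangle$ would force $\Lambda\cap\Pi$ to be a line, a codimension-$8$ condition on $\Lambda$, whereas such triples $(p,p'_1,p'_2)$ vary in a family of dimension $\leq1+3+3=7<8$, so a general $\Lambda$ admits none; and more than two support points would produce a collinear or a planar triple through $p$. \textit{(D)} If $p$ is a bad point (its fiber contains a smooth point $p'$), then $X_{\pi(p)}$ is reduced at $p'$: otherwise, by (ii), $\Lambda$ meets both the line $\overline{pp'}$ and the $3$-plane $T_{p'}$, which share only $p'$ and hence span a $4$-plane $\Xi$; then $\Lambda\cap\Xi$ must contain a line meeting $\overline{pp'}$, a condition of codimension at least $5$, while the pairs $(p,p')$ vary in a family of dimension $\leq1+3=4<5$. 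In (C) and (D) the degenerate configurations (where $\Pi$ or $\Xi$ collapses, the collinear case aside) vary in families of strictly smaller dimension --- using that a bad $p$ cannot be a vertex of a cone structure on $X$, since then $\overline{pp'}\subseteq X$ would make $X_{\pi(p)}$ infinite --- and are disposed of by the same bookkeeping.

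Granting (A)--(D): for a general $\Lambda$, each singular fiber $X_{\pi(p)}$ is supported at $p$ plus at most one further point, which is a reduced point of $X_{\reg}$ when present; splitting the finite scheme $X_{\pi(p)}$ at its support gives the decomposition $Z\sqcup q$ of part $(2)$. For part $(1)$ it remains to bound the bad points. By (i) they are the image in $X_{\Sing}$ of the incidence variety $\{(p,p',\Lambda)\mid p\in X_{\Sing},\; p'\in X_{\reg},\; \overline{pp'}\cap\Lambda\neq\emptyset\}$, whose base has dimension $\leq\dim X_{\Sing}+3$ and whose fiber condition cuts out a codimension-$4$ locus in $\G(\P^{r-5},\P^r)$; since $\dim X_{\Sing}=1$ this incidence variety has dimension $\leq\dim\G(\P^{r-5},\P^r)$, so a general fiber --- hence the bad locus --- is finite. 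The step I expect to be the real obstacle is the collinear case in (C) (and its analogue in (B)): the lines through a fixed singular point meeting $X$ in a scheme of length $\geq n+1-\delta$ may move in too large a family for an elementary count, and Theorem~\ref{p:01} is precisely the tool that controls the locus they sweep out; the codimension-$5$ estimate in (D) is the other point demanding care.
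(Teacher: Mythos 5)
Your argument is correct, and its two pillars --- Theorem \ref{p:01} applied with $Z=X_{\Sing}$ to exclude collinear configurations through a singular point, plus generic-position dimension counts in $\G(\P^{r-5},\P^r)$ --- are exactly the paper's. Your (A), (B) and the final finiteness count coincide with the paper's opening moves (lines through two singular points sweep out a variety of dimension $\leq 3$; lines from $X_{\Sing}$ to $X_{\reg}$ sweep out dimension $\leq 5$, so a general $\Lambda$ meets only finitely many of them). Where you diverge is in how the reducedness of $q$ is obtained. The paper first shows, by a single count on the ($\leq 7$-dimensional) family of planes $L$ with $\length(L\cap X_{\reg})\geq 2$ and $L\cap X_{\Sing}\neq\emptyset$, that a general $\Lambda$ meets every such $L$ in dimension $\leq 0$, whence the whole scheme $X_y\cap X_{\reg}$ is forced to lie on a line through $p$; Theorem \ref{p:01} then gives $\length(X_y\cap X_{\reg})\leq 1$, which delivers the support bound and the reducedness of $q$ in one stroke. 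You instead bound the number of support points (your (C), whose planar case is the same $7$-versus-codimension-$8$ count in disguise) and prove reducedness separately via the tangent-space incidence (D). That extra step is where the real delicacy sits: when $p\in T_{p'}$ the two incidence conditions collapse to the single codimension-$4$ condition $\Lambda\cap\overline{pp'}\neq\emptyset$, and you then need the locus of such pairs $(p,p')$ to have dimension $\leq 3$; this holds precisely because, as you note, $p\in T_{p'}$ for all $p'$ would make $X$ a cone with vertex $p$, in which case finiteness of $\pi$ already forces $X_{\pi(p)}$ to be supported at $p$. So your route closes, but at the cost of (D) and its degenerate configurations, which the paper's ordering --- confine $X_y\cap X_{\reg}$ to a line through $p$ first, then invoke Theorem \ref{p:01} --- avoids entirely.
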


\begin{proof}
Note first that if $p \in X_{\Sing}$ is an isolated singular point, then the fiber  $X_y \subseteq  \P^{r-4}_y$ over $y=\pi(p)$ is supported at the single point $p$. 
Hence, without loss of generality, we may assume that $\dim X_{\Sing}=1$. Since $\Lambda$ is general and the variety swept out by lines connecting two singular points of $X$ has dimension $\leq 3$,  it follows that for any $p \in X_{\Sing}$, the scheme $X_y\cap X_{\Sing}$ is supported only at $p$. Also since the variety swept out by lines connecting a singular point with a nonsingular point of $X$ has dimension $\leq 5$, there are at most finitely many points $p_1, \ldots, p_k \in X_{\Sing}$ such that the support of the corresponding fiber $X_y$ has more than one points. Let $p=p_i$ for some $1 \leq i \leq k$, and $X_y$ be the fiber over $y=\pi(p)$ as usual.
We claim that 
$$
\text{$X_y \cap X_{\reg}$ is contained in a line passing through $p$.}
$$
Granting the claim, by Theorem \ref{p:01} for $Z=X_{\Sing}$, we see that $\length(X_y\cap X_{\reg}) \leq 1$. Thus $X_y$ is supported at two points $p \in X_{\Sing}$ and $q \in X_{\reg}$, and $q$ is a reduced point in $X_y$. 

It only remains to prove the claim. To this end, consider a closed subset
$$
V:=\overline{\{ L \in \G(\P^2, \P^r) \mid  \text{ $\length(L \cap X_{\reg}) \geq 2$ and $L$ intersects with $X_{\Sing}$ } \} } \subseteq \G(\P^2, \P^r).
$$
If the claim is not true, then $\dim(\Lambda \cap L) \geq 1$ for some $L \in V$.
Note that $\dim V \leq 7$.
Let $U \subseteq \G(\P^1, \P^r)$ be the closed subset parametrizing lines contained in $L\in V$. Note that $\dim U \leq 9$. Consider now the diagram of the flag varieties
\[
\begin{split}
\xymatrix{
& \F(\P^1, \P^{r-5}, \P^r) \ar[dl]_-f \ar[dr]^-g& \\
\G(\P^1, \P^r)  & &\G(\P^{r-5}, \P^r),
}
\end{split}
\]
where $f$ and $g$ are natural projections. Then we obtain a closed subset
$$
W:=g(f^{-1}(U))=\overline{\{\Lambda\in \G(\P^{r-5},\P^r)\mid \dim (\Lambda\cap L) \geq 1, L \in V \}} \subseteq \G(\P^{r-5}, \P^r).
$$
Since every fiber of $f$ has dimension $5(r-6)$, it follows that $\dim f^{-1}(U) \leq 5r-21$ so that $\dim W \leq 5r-21$. Thus $W$ has codimension at least $1$ in $\G(\P^{r-5}, \P^r)$. This implies that for a general $(r-5)$-dimensional linear subspace $\Lambda \subseteq \P^r$ and for every $L \in V$, we have $\dim (\Lambda \cap L) \leq 0$. Thus the claim follows, so we finish the proof.
\end{proof}

We are now in the position to prove the following main result of this section. The theorem is the main ingredient in the proofs of Theorems \ref{main} and \ref{main2}. It turns out that only using length is not enough to bound the regularity of $X_y$ when $X_y$ meets the singular locus of $X$. This is the place where we have to use Loewy length, especially Proposition \ref{regfibsing3}. 

\begin{theorem}\label{regfiber}
Let $X \subseteq \P^r$ be a normal projective threefold, and let $\pi \colon X \to \overline{X} \subseteq \P^4$ be a general projection from an $(r-5)$-dimensional linear subspace $\Lambda$ in $ \P^r$.
\begin{enumerate}[\indent $(1)$]
	\item Suppose that $X$ has rational singularities. Then the following hold:
	\begin{enumerate}[$(a)$]
		\item The fiber $X_y \subseteq \P_y^{r-4}$ is $4$-regular for each $y \in \overline{X}$.
		\item There are only finitely many points $y \in \overline{X}$ such that $X_y \subseteq \P_y^{r-4}$ is not $3$-regular, and such a fiber $X_y$ can be decomposed into two disjoint subscheme $Z$ and $q$ such that $\reg(Z)=3$ and $q$ is a reduced point in $X_{\reg}$. 
	\end{enumerate}
	
	\item Suppose that $X$ has Cohen-Macaulay Du Bois singularities. Then the following hold:
	\begin{enumerate}[$(a)$]
		\item The fiber $X_y \subseteq \P_y^{r-4}$ is $5$-regular for each $y \in \overline{X}$.
		\item There are only finitely many points $y \in \overline{X}$ such that $X_y \subseteq \P_y^{r-4}$ is not $4$-regular, and such a fiber $X_y$ can be decomposed into two disjoint subscheme $Z$ and $q$ such that $\reg(Z)=4$ and $q$ is a reduced point in $X_{\reg}$. 
	\end{enumerate}
	\end{enumerate}
\end{theorem}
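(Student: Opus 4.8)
First I would fix, once and for all, a projection center $\Lambda\in\G(\P^{r-5},\P^r)$ that is general enough for the conclusions of Propositions~\ref{mather}, \ref{fibercurve} and~\ref{regfibsing3} to hold simultaneously; this is legitimate because each of those statements describes the generic behaviour over a dense open subset of $\G(\P^{r-5},\P^r)$, and a finite intersection of dense opens is dense open. The proof then runs through the fibers $X_y$, $y\in\overline X$, one case at a time, using throughout that $X_y=X\cap\P_y^{r-4}$ scheme-theoretically, and that if $p$ is a point of $X_y$ with $y=\pi(p)$ then $\P_y^{r-4}=\langle\Lambda,X_y\rangle\supseteq\langle\Lambda,p\rangle=L_p$, both of dimension $r-4$, so that $\P_y^{r-4}=L_p$ and $X_y=X\cap L_p$. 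Each fiber is either supported in $X_{\reg}$ or has a point of $X_{\Sing}$ in its support, and I treat these two cases separately.

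For a fiber supported in $X_{\reg}$, Proposition~\ref{mather}(2) gives $\length(X_y)\le 4$, so $X_y$ is $4$-regular (\emph{a fortiori} $5$-regular), and Proposition~\ref{mather}(3) shows that all but finitely many such fibers are $3$-regular, the exceptions being four collinear reduced points. For such an exceptional fiber one takes $Z$ to be three of the four points --- a collinear triple of reduced points, so $\reg(Z)=3$ --- and $q$ the remaining point, which is a reduced point of $X_{\reg}$. This settles the smooth-locus fibers for parts (1)(a), (2)(a), (1)(b), and shows there are no non-$4$-regular fibers supported in $X_{\reg}$, as required for part (2)(b).

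For a fiber with a point $p\in X_{\Sing}$ in its support, we have $y=\pi(p)$ and $X_y=X\cap L_p$. By Proposition~\ref{regfibsing3}, $L_p$ is a reduction at $(X,p)$, so $ll(\mathcal{O}_{X\cap L_p,p})\le 2$ if $X$ has rational singularities and $\le 3$ if $X$ has Cohen-Macaulay Du Bois singularities. If $X_y$ is supported only at $p$ --- which by Proposition~\ref{fibercurve}(1) fails for only finitely many $p$, and never when $\dim X_{\Sing}=0$ --- then Proposition~\ref{regfibsing3} gives at once that $X_y$ is $3$-regular, resp. $4$-regular. For each of the finitely many remaining points $p=p_i$, Proposition~\ref{fibercurve}(2) decomposes $X_y=Z\sqcup q$ with $Z$ the connected component supported at $p$ and $q$ a reduced point of $X_{\reg}$; since $\mathcal{O}_{Z,p}=\mathcal{O}_{X\cap L_p,p}$, Theorem~\ref{loewy-reg} applied to $X_y$ (Loewy lengths $ll(\mathcal{O}_{Z,p})$ and $0$) shows $X_y$ is $4$-regular, resp. $5$-regular; this completes parts (1)(a) and (2)(a). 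For parts (1)(b) and (2)(b), Theorem~\ref{loewy-reg} applied to $Z$ alone gives $\reg(Z)\le ll(\mathcal{O}_{Z,p})+1\le 3$, resp. $\le 4$, and I would combine this with the elementary bound $\reg(Z'\sqcup q')\le\reg(Z')+1$ for a finite scheme $Z'$ and a reduced point $q'$ off it: if $Z'$ is $m$-regular then $Z'$ is $m$-normal and $\mathcal{I}_{Z'}(m)$ is globally generated, so a degree-$m$ form vanishing on $Z'$ but not at $q'$ lets one surject onto $H^0(\mathcal{O}_{Z'}(m))\oplus\C=H^0(\mathcal{O}_{Z'\sqcup q'}(m))$, the positive-degree cohomology being vacuous in dimension zero. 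Hence any non-$3$-regular, resp. non-$4$-regular, fiber not already handled in the previous paragraph is one of these $X_y=Z\sqcup q$, and $4\le\reg(X_y)\le\reg(Z)+1\le 4$, resp. $5\le\reg(X_y)\le\reg(Z)+1\le 5$, forces $\reg(Z)=3$, resp. $\reg(Z)=4$.

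The substantive content is already packed into the three propositions quoted: the length and finiteness statements in the smooth locus (Proposition~\ref{mather}), the fact that the support of such a fiber has at most two points, which rests on the secant-variety estimate of Theorem~\ref{p:01} (Proposition~\ref{fibercurve}), and the Loewy length bound for reduction linear sections coming from Brian\c{c}on-Skoda type theorems (Proposition~\ref{regfibsing3} via Lemma~\ref{red=>loewy}). I expect the main --- and only mild --- difficulty here to be bookkeeping: making a single general center $\Lambda$ serve all three at once, and verifying that every fiber, good or bad, lands in exactly one of the cases above.
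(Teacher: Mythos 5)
Your proposal is correct and follows essentially the same route as the paper's proof: the same dichotomy between fibers supported in $X_{\reg}$ (handled by Proposition~\ref{mather}(2),(3)) and fibers meeting $X_{\Sing}$ (handled by Proposition~\ref{fibercurve} together with the Loewy length bound of Proposition~\ref{regfibsing3} and Theorem~\ref{loewy-reg}). The only additions are bookkeeping details the paper leaves implicit, such as the identification $\P_y^{r-4}=L_p$ and the pinning down of $\reg(Z)$ exactly.
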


\begin{proof}
First, consider the case that the fiber $X_y$ over $y \in \overline{X}$ is supported in $X_{\reg}$.
By Proposition \ref{mather} (2) and (3), $X_y  \subseteq \P_y^{r-4}$ is $4$-regular, and there are only finitely many points $y \in \overline{X}$ such that $X_y \subseteq \P_y^{r-4}$ is not $3$-regular; in this case, $X_y$ consists of collinear four reduced points, so we can decompose it into two disjoint subscheme $Z$ and $q$ such that $Z$ consists of three points. 

Next, consider the case that the fiber $X_y$ over $y \in \overline{X}$ meets $X_{\Sing}$, i.e., $y=\pi(p)$ for some $p \in X_{\Sing}$. 
Proposition \ref{fibercurve} $(1)$ says that $X_y$ is supported only at $p$ except finitely many points $p_1, \ldots, p_k \in X_{\Sing}$ (notice that the exceptional case only occurs when $\dim X_{\Sing}=1$).
If $p \not\in \{p_1, \ldots, p_k \}$, then Proposition \ref{regfibsing3} implies that $X_y \subseteq \P^{r-4}_y$ is $3$-regular (resp. $4$-regular) when $X$ has rational singularities (resp. $X$ has Cohen-Macaulay Du Bois singularities). Suppose that $p \in \{p_1, \ldots, p_k \}$. By Proposition \ref{fibercurve} (2), $X_y$ is decomposed into two disjoint subscheme $Z$ and $q$ such that $Z$ is supported at $p$ and $q$ is a reduced point in $X_{\reg}$.
When $X$ has rational singularities (resp. $X$ has Cohen-Macaulay Du Bois singularities), Proposition \ref{regfibsing3} implies that $Z \subseteq \P^{r-4}_y$ is $3$-regular (resp. $4$-regular), so $X_y \subseteq \P_y^{r-4}$ is $4$-regular (resp. $5$-regular). 
\end{proof}

\begin{remark}\label{rmk}
 As setting in the theorem, write $\overline{X}_4:=\{y\in \overline{X}\mid X_y \subseteq \P^{r-4}_y \text{ is not $3$-regular}  \}$. In the proof, we actually show the classification of the fiber $X_y$ over $y \in \overline{X}_4$.
\begin{enumerate}[\indent $(1)$]
	\item Suppose that $X$ has rational singularities. For any $y \in \overline{X}_4$, one of the following holds:
	\begin{enumerate}[$(a)$]
		\item $X_y$ consists of collinear four reduced points in $X_{\reg}$.
		\item $X_y$ can be decomposed into two disjoint subschemes $Z$ and $q$ such that $Z$ is supported at a singular point of $X$ with $\reg(Z)=3$ ($ll(\sO_Z) = 2$) and $q$ is a reduced point in $X_{\reg}$. This case only occurs when $\dim X_{\Sing}=1$.
	\end{enumerate}
	
	\item Suppose that $X$ has Cohen-Macaulay Du Bois singularities. For any $y \in \overline{X}_4$, one of the following holds:
		\begin{enumerate}[$(a)$]
		\item $X_y$ consists of collinear four reduced points in $X_{\reg}$.
		\item $X_y$ is supported at a singular point of $X$ with $ll(\sO_{X_y}) = 3$.
		\item $X_y$ can be decomposed into two disjoint subschemes $Z$ and $q$ such that $Z$ is supported at a singular point of $X$ with $\reg(Z)=3$ or $4$ ($ll(\sO_Z) = 2$ or $3$ respectively) and $q$ is a reduced point in $X_{\reg}$. This case only occurs when $\dim X_{\Sing}=1$.
	\end{enumerate}
\end{enumerate}
Note that $\reg(X_y) = 4$ for cases $(1.a), (1.b), (2.a), (2.b),$ and $(2.c)$ with $\reg(Z)=3$ and $\reg(X_y)=5$ for case $(2.c)$ with $\reg(Z)=4$.
\end{remark}

\section{Castelnuovo-Mumford Regularity Bounds}\label{proofsec}

\noindent We prove Theorems \ref{main} and \ref{main2} in this section by using the method of generic projection, which has been used to bound the regularities in \cite{L}, \cite{Kw1, Kw, Kw2} and \cite{Ni}. The paper \cite{Kw2} gives a very nice explanation of this method, so we refer the reader to there for further information.  

Let $X \subseteq \P^r$ be a non-degenerate locally Cohen-Macaulay projective variety of dimension $n$, codimension $e$, and degree $d$. As we have introduced in the beginning of Section \ref{fibersec}, 
let
$$
\pi \colon X \to \overline{X} \subseteq \P^{n+1}
$$ 
be a general projection from an $(r-n-2)$-dimensional linear subspace $\Lambda\subseteq \P^r$. Denote by $V=H^0(\Lambda,\sO_{\Lambda}(1))$ the vector space of linear forms on $\Lambda$. 
Let
$$
\mathcal{V}=\{V_1,V_2,\ldots, V_k\}
$$
be a collection of linear subspaces $V_j \subseteq H^0(\P^r, \mathcal{O}_{\P^r}(j))$ such that 
the natural maps 
$$
V_1 \to V,~~ V_2 \to S^2(V)=H^0(\Lambda, \sO_{\Lambda}(2))
$$ 
are isomorphisms and 
$$
V_j \to S^j(V)=H^0(\Lambda, \sO_{\Lambda}(j))
$$
are injective for all $3 \leq j \leq k$.
Then we have a map
$$
w_{\mathcal{V}} \colon V_k \otimes \mathcal{O}_{\P^{n+1}}(-k) \oplus \cdots \oplus V_1 \otimes \mathcal{O}_{\P^{n+1}}(-1) \oplus \mathcal{O}_{\P^{n+1}} \longrightarrow \pi_* \mathcal{O}_X.
$$
Let $p \colon \Bl_{\Lambda}\P^r \to \P^r$ be the blow-up of $\P^r$ along the center $\Lambda$, and $q \colon \Bl_{\Lambda}\P^r \to \P^{n+1}$ be the natural projection. Then the map $w_{\mathcal{V}}$ is induced by the natural map
$$
q_*p^* \mathcal{O}_{\P^r}(k) \longrightarrow q_*p^*\mathcal{O}_X(k)=\pi_*\mathcal{O}_X(k).
$$

For a point $y\in \overline{X}$, as discussed in Section \ref{fibersec}, the fiber $X_y$ is a subscheme of $\P^{r-n-1}_y$. Furthermore, there is a linear form $l_y\in H^0(\P^{n+1},\sO_{\P^{n+1}}(1))$ which generates $1$ in the residue field $\C(y)$ of $y$.
Note that $l_y$ is regarded as a linear form on $\P^{r-n-1}_y$ and $l^j_y$ represents the  subspace of $H^0(\P^{r-n-1}_y,\sO_{\P^{r-n-1}_y}(j))$ with the basis $l^j_y$. Furthermore, $V_j$ also can be regarded as a subspace of $H^0(\P^{r-n-1}_y, \mathcal{O}_{\P^{r-n-1}_y}(j))$ in an obvious way.
Therefore, by base change, the map $w_{\mathcal{V}}$ is surjective if and only if for all $y\in \overline{X}$ the map
$$
w_{\mathcal{V},y}:V_k \oplus (V_{k-1} \otimes l_y) \oplus \cdots \oplus (V_1 \otimes  l_y^{k-1} ) \oplus  l_y^k \longrightarrow H^0(X_y,\mathcal{O}_{X_y}(k))
$$
is surjective. Note that the vector space in the left side is a subspace of $H^0(\P^{r-n-1}_y,\sO_{\P^{r-n-1}_y}(k))$. Therefore one can check the surjectivity of $w_{\mathcal{V},y}$ for the subscheme $X_y$ in the space $\P^{r-n-1}_y$.

Suppose that $w_{\mathcal{V}}$ is surjective. Since $\pi_*\mathcal{O}_X$ is Cohen-Macaulay, it follows that
$$
E:=\ker(w_{\mathcal{V}})
$$
is a vector bundle on $\P^{n+1}$.
Assume now that the Kodaira vanishing theorem holds on $X$ (e.g., $X$ has rational or Du Bois singularities).
By \cite[Lemma 3.1]{Kw2}, we have $\reg(E^*) \leq -2$ and $\reg(E^*) \leq -3$ if and only if $X \subseteq \P^r$ is linearly normal and $H^0(\P^r, \mathcal{I}_{X|\P^r}(2))=H^1(X, \mathcal{O}_X)=0$. Then by the standard cohomological argument, the regularity of $X$ can be bounded as follows.

\begin{lemma}[{\cite[Lemma 3.2)]{Kw2}}]\label{regbound}
Under the same notations as above, we have
$$
\reg(X) \leq d-e+1 + \sum_{j=3}^k (j-2) \dim V_j.
$$
If moreover $\reg(E^*) \leq -3$, then we have
$$
\reg(X) \leq d-e+1 -\dim V_1 - \dim V_2 +  \sum_{j=4}^k (j-3) \dim V_j.
$$
\end{lemma}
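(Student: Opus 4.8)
The plan is to derive all of $\reg(X)$ from the exact sequence $0 \to E \to \mathcal{F} \to \pi_*\mathcal{O}_X \to 0$ on $\P^{n+1}$, where $\mathcal{F} = \mathcal{O}_{\P^{n+1}} \oplus \bigoplus_{j=1}^{k} V_j \otimes \mathcal{O}_{\P^{n+1}}(-j)$. I would first invoke the standard reformulation that $\reg(X) \leq m$ if and only if $X \subseteq \P^r$ is $(m-1)$-normal and $\mathcal{O}_X$ is $(m-1)$-regular; since $\pi$ is finite, $H^i(X, \mathcal{O}_X(t)) = H^i(\P^{n+1}, \pi_*\mathcal{O}_X(t))$, so the second condition is precisely the $(m-1)$-regularity of the sheaf $\pi_*\mathcal{O}_X$ on $\P^{n+1}$. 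Thus it is enough to bound by $\reg(E)-1$ both the Castelnuovo--Mumford regularity of $\pi_*\mathcal{O}_X$ on $\P^{n+1}$ and the least integer $m$ for which $X$ is $m$-normal; this will give $\reg(X) \leq \reg(E)$.

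For the first bound I twist the sequence by $\mathcal{O}_{\P^{n+1}}(t)$ and take cohomology on $\P^{n+1}$. As $\mathcal{F}$ is a direct sum of line bundles, its only nonvanishing higher cohomology lies in degree $n+1 > \dim\overline{X} = n$; hence $H^i(\pi_*\mathcal{O}_X(t)) \cong H^{i+1}(E(t))$ for $1 \leq i \leq n-1$ and $H^n(\pi_*\mathcal{O}_X(t)) \hookrightarrow H^{n+1}(E(t))$, which at once yields $\reg_{\P^{n+1}}(\pi_*\mathcal{O}_X) \leq \reg(E)-1$. For $m$-normality, the same sequence identifies the cokernel of $H^0(\mathcal{F}(m)) \to H^0(\pi_*\mathcal{O}_X(m)) = H^0(X, \mathcal{O}_X(m))$ with $H^1(E(m))$, which vanishes for $m \geq \reg(E)-1$; and, unwinding the construction of $w_{\mathcal{V}}$ from $q_*p^*\mathcal{O}_{\P^r}(k) \to \pi_*\mathcal{O}_X(k)$, the $j$-th summand of $\mathcal{F}$ contributes in degree $m$ the image of $V_j \otimes \pi^*H^0(\P^{n+1}, \mathcal{O}(m-j))$, which by $\mathcal{O}_X(1) = \pi^*\mathcal{O}_{\P^{n+1}}(1) = \mathcal{O}_{\P^r}(1)|_X$ and $V_j \subseteq H^0(\P^r, \mathcal{O}(j))$ lies inside the image of $H^0(\P^r, \mathcal{O}(m)) \to H^0(X, \mathcal{O}_X(m))$; so $X$ is $m$-normal whenever $m \geq \reg(E)-1$.

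It then remains to bound $\reg(E)$, and this is where $d$ and $e$ enter, via the Chern data of $E$. From the exact sequence, $\rank(E) = 1 + \sum_{j=1}^{k}\dim V_j =: \rho$; and since $\pi$ is generically one-to-one onto the degree-$d$ hypersurface $\overline{X}$, one has $c_1(\pi_*\mathcal{O}_X) = d$, hence $c_1(E) = -\bigl(d + \sum_{j=1}^{k} j\dim V_j\bigr)$. Writing $c := -c_1(E) = d + \sum_{j=1}^{k} j\dim V_j$, this gives $\det(E^*) = \mathcal{O}_{\P^{n+1}}(c)$ and therefore the canonical isomorphism $E(c) \cong \wedge^{\rho-1}(E^*)$. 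Now $\reg(E^*) \leq -2$ says exactly that $E^*(-2)$ is $0$-regular, and since exterior powers of a $0$-regular vector bundle on projective space are again $0$-regular, $(\wedge^{\rho-1}(E^*))(-2(\rho-1))$ is $0$-regular, i.e.\ $\reg(\wedge^{\rho-1}(E^*)) \leq -2(\rho-1)$. Consequently
$$\reg(E) = \reg(\wedge^{\rho-1}(E^*)) + c \;\leq\; c - 2(\rho-1) \;=\; d + \sum_{j=1}^{k}(j-2)\dim V_j \;=\; d - \dim V_1 + \sum_{j=3}^{k}(j-2)\dim V_j,$$
and since $\dim V_1 = \dim V = e-1$ this equals $d-e+1+\sum_{j=3}^{k}(j-2)\dim V_j$; together with $\reg(X) \leq \reg(E)$ this is the first bound. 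When $\reg(E^*) \leq -3$, the identical estimate with $-3$ in place of $-2$ gives $\reg(E) \leq d + \sum_{j=1}^{k}(j-3)\dim V_j$, which, again using $\dim V_1 = e-1$, equals $d-e+1-\dim V_1-\dim V_2+\sum_{j=4}^{k}(j-3)\dim V_j$.

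I expect the most delicate step to be the normality comparison in the second paragraph: one must verify carefully, tracing through the definition of $w_{\mathcal{V}}$, that the image of $H^0(\P^{n+1}, \mathcal{F}(m))$ in $H^0(X, \mathcal{O}_X(m))$ truly factors through the restriction from $H^0(\P^r, \mathcal{O}(m))$, since it is this point that transfers the estimate from the hypersurface $\overline{X} \subseteq \P^{n+1}$ back to $X \subseteq \P^r$ and is where the codimension $e$ appears, through $\dim V_1 = e-1$. The other input I would isolate and reference precisely is the fact that $\reg(\wedge^{p}\mathcal{E}) \leq p\,\reg(\mathcal{E})$ for a vector bundle $\mathcal{E}$ on projective space (equivalently, that exterior powers of $0$-regular bundles are $0$-regular); granting these two points, everything else is routine bookkeeping with the exact sequence.
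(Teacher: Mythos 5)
Your proof is correct and is essentially the Lazarsfeld--Kwak argument underlying the cited result: the reduction $\reg(X)\leq\reg(E)$ via the exact sequence $0\to E\to\mathcal{F}\to\pi_*\mathcal{O}_X\to 0$ together with the factorization of the image of $H^0(\mathcal{F}(m))$ through $H^0(\P^r,\mathcal{O}_{\P^r}(m))$, followed by the determinant identity $E(c)\cong\wedge^{\rho-1}(E^*)$ with $c=d+\sum_j j\dim V_j$ and subadditivity of regularity under exterior powers, and the bookkeeping with $\dim V_1=e-1$ and $c_1(\pi_*\mathcal{O}_X)=d$ all check out. Note that the paper itself gives no proof of this lemma, quoting it directly from \cite[Lemma 3.2]{Kw2}, so your reconstruction fills in exactly the standard argument that the authors are relying on.
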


We are ready to prove Theorem \ref{main}.

\begin{proof}[Proof of Theorem \ref{main}]
Take a general projection $\pi \colon X \to \overline{X} \subseteq \P^4$ from an  $(r-5)$-dimensional linear subspace $\Lambda \subseteq \P^r$, and set $V_1 \subseteq H^0(\P^r, \mathcal{O}_{\P^r}(1))$ and $V_2 \subseteq H^0(\P^r, \mathcal{O}_{\P^r}(2))$ to be any subspaces such that the natural maps $V_1 \to V=H^0(\Lambda,\sO_{\Lambda}(1))$ and $V_2 \to S^2(V)$ are isomorphisms.
For $y \in \overline{X}$, denote by $X_y:=\pi^{-1}(y)$ the fiber of the projection $\pi$ and $\P^{r-4}_y:=\langle \Lambda, X_y \rangle \simeq \P^{r-4}$.
Let $\overline{X}_4 := \{ y \in \overline{X} \mid X_y \subseteq  \P_y^{r-4} \text{ is not $3$-regular}\}$.
By definition, for every $y \in \overline{X} \setminus \overline{X}_4$, the fiber $X_y \subseteq  \P_y^{r-4}$ is $3$-regular, i.e., it is $2$-normal. Thus the  map
\begin{equation}\label{eq:02}
V_2  \otimes \mathcal{O}_{\P^{4}}(-2)  \oplus V_1  \otimes \mathcal{O}_{\P^{4}}(-1) \oplus \mathcal{O}_{\P^{4}} \longrightarrow \pi_* \mathcal{O}_X
\end{equation}
is surjective over all $y \in \overline{X} \setminus \overline{X}_4$ (i.e., tensoring with $\C(y)$ the resulting map is surjective). 

By Theorem \ref{regfiber}, $\overline{X}_4$ is a finite set, and for all $y \in \overline{X}_4$, the zero dimensional scheme $X_y$ can be decomposed into two disjoint subschemes $Z$ and $q$ such that $q$ is a reduced point in $X_{\reg}$ and $Z \subseteq \P_y^{r-4}$ is $3$-regular.
Let $H$ be a general linear form on $\P^r$ which is also a linear form on both $\P_y^{r-4}$ and $\Lambda$. 
By the choice of $H$, it gives a nonzero constant in the residue field $\C(q)$.  Let $l_y$ be a linear form on $\P^{4}$ that does not vanish at $y$. Then we have a diagram
	$$\begin{CD}
0@>>>V_2 \oplus (V_1\otimes l_y)\oplus l^2_y@>\cdot l_y>> H^3\oplus (V_2\otimes l_y) \oplus (V_1\otimes l_y^2)\oplus l^3_y @>>> H^3 @>>>0\\
@.@V\theta_2VV @V\theta_3VV @V\theta_HVV\\
0@>>>H^0(Z,\sO_Z(2)) @>\cdot l_y>> H^0(X_y,\sO_{X_y}(3))@>>> k(q) @>>> 0
\end{CD}$$\\
where $H^3$ represents a  subspace of $H^0(\P^{r-4}_y,\sO_{\P^{r-4}_y}(3))$ with the basis $H^3$ and the same for the powers of $l_y$  regarded as a linear form on $\P^{r-4}_y$. Since $Z \subseteq \P^{r-4}_y$ is $3$-regular and $H^0(\P^{r-4}_y,\sO_{\P^{r-4}_y}(2))=V_2 \oplus (V_1\otimes l_y)\oplus l^2_y$, it follows that $\theta_2$ is surjective. By the choice of $H$, the map $\theta_H$ is also surjective. Hence $\theta_3$ is surjective as well. Thus if we set $V_3:=\langle H^3 \rangle$ and  $\mathcal{V}:=\{V_1, V_2, V_3 \}$, then the map
$$
w_{\mathcal{V}} \colon V_3 \otimes \mathcal{O}_{\P^4}(-3) \oplus V_2  \otimes \mathcal{O}_{\P^{4}}(-2)  \oplus V_1  \otimes \mathcal{O}_{\P^{4}}(-1) \oplus \mathcal{O}_{\P^{4}} \longrightarrow \pi_* \mathcal{O}_X
$$
is surjective over all $y \in \overline{X}_4$. Combining the surjectivity in (\ref{eq:02}), we conclude that the map $w_{\mathcal{V}}$
is indeed surjective. Finally, by applying Lemma \ref{regbound}, we obtain 
$$
\reg(X) \leq d-e+1+1=d-e+2,
$$
so we complete the proof.
\end{proof}

\begin{remark}\label{rem:furthercase}
Let $X \subseteq \P^r$ be a non-degenerate normal projective threefold of degree $d$ and codimension $e$. Take a general projection $\pi \colon X \to \overline{X} \subseteq \P^4$ from an $(r-5)$-dimensional linear subspace $\Lambda \subseteq \P^r$.
\begin{enumerate}[$(1)$]
\item Suppose that $X$ has isolated rational singularities and the condition (\ref{techassump}) in Introduction holds, i.e.,
$$
\dim \bigcup_{l \in \Sigma} l \leq 4, \text{ where }~\Sigma=\{\text{lines } l \subseteq \P^r \mid \length(l \cap X) \geq 4 \text{ and } l \cap X_{\Sing} = \emptyset  \}.
$$
Notice that the condition (\ref{techassump}) means that there is no $y \in \overline{X}$ such that $X_y$ consists of collinear four reduced points. Thus the case $(1.a)$ in Remark \ref{rmk} cannot occur. Moreover, since $\dim X_{\Sing} \leq 0$, the case $(1.b)$ in Remark \ref{rmk} cannot occur. Thus $X_y \subseteq \P^{r-4}_y$ is $3$-regular for all $y \in \overline{X}$. Then the map
$$
w \colon V_2  \otimes \mathcal{O}_{\P^{4}}(-2)  \oplus V_1  \otimes \mathcal{O}_{\P^{4}}(-1) \oplus \mathcal{O}_{\P^{4}} \longrightarrow \pi_* \mathcal{O}_X
$$
is surjective. By Lemma \ref{regbound}, we obtain
$$
\reg(X) \leq d-e+1.
$$
\item Suppose that $X$ has Cohen-Macaulay Du Bois singularities. As in the proof of Theorem \ref{main}, we can argue that the map
$$
w \colon V_4 \otimes \mathcal{O}_{\P^4}(-4) \oplus V_3 \otimes \mathcal{O}_{\P^4}(-3) \oplus V_2  \otimes \mathcal{O}_{\P^{4}}(-2)  \oplus V_1  \otimes \mathcal{O}_{\P^{4}}(-1) \oplus \mathcal{O}_{\P^{4}} \longrightarrow \pi_* \mathcal{O}_X
$$
is surjective, where we take $V_3, V_4$ with $\dim V_3 = {e+1 \choose 3}, \dim V_4=1$. By Lemma \ref{regbound}, we obtain
$$
\reg(X) \leq d-e+3 + {e+1 \choose 3}.
$$
On the other hand, consider a general hyperplane section $S$ of $X$, which is a non-degenerate normal projective surface with Du Bois singularities in $\P^{r-1}$. By Remark \ref{rem-surDB}, we know that $\reg(S) \leq d-e+1$. Then \cite[Lemma 2.2]{NP} (see also \cite[Lemma 2.1]{KP}) implies that 
$$
\reg(\mathcal{O}_X) \leq d-e.
$$
\end{enumerate}
\end{remark}

Finally, we prove Theorem \ref{main2}. We shall use the following proposition. It is probably known to experts, but we could not find a suitable reference. So we give a proof here.

\begin{proposition}\label{cod2=>linnorm}
Let $X\subseteq\P^r$ be a projective variety of codimension $2$. Assume that $\dim X\geq 3$. Then $X \subseteq \P^r$ is linearly normal.
\end{proposition}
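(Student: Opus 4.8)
The plan is to prove linear normality of a codimension two projective variety $X \subseteq \P^r$ with $\dim X \geq 3$ by induction on $\dim X$, reducing to a surface situation via a general hyperplane section together with a careful use of cohomology. First I would handle the base-case-type input: a general hyperplane section $Y := X \cap H$ is again a projective variety of codimension two in $\P^{r-1} = H$, and $\dim Y = \dim X - 1 \geq 2$. The key cohomological tool is that $X$, being of codimension $2$ in $\P^r$, is arithmetically Cohen-Macaulay by the Hilbert-Burch theorem only if it is so — but in general a codimension two subvariety need not be aCM, so instead the right route is to work directly with the structure sheaf cohomology $H^1(\P^r, \mathcal{I}_{X|\P^r}(1))$, which vanishes precisely when $X$ is linearly normal.

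Concretely, I would argue as follows. Linear normality of $X \subseteq \P^r$ is equivalent to surjectivity of $H^0(\P^r,\mathcal{O}_{\P^r}(1)) \to H^0(X,\mathcal{O}_X(1))$, i.e.\ to $H^1(\P^r,\mathcal{I}_{X|\P^r}(1)) = 0$. Using the exact sequence
$$
0 \longrightarrow \mathcal{I}_{X|\P^r}(0) \longrightarrow \mathcal{I}_{X|\P^r}(1) \longrightarrow \mathcal{I}_{Y|H}(1) \longrightarrow 0,
$$
and noting $H^1(\P^r,\mathcal{I}_{X|\P^r}) = 0 = H^2(\P^r,\mathcal{I}_{X|\P^r})$ since $X$ is a connected (indeed irreducible) positive-dimensional projective variety, it suffices to show $H^1(H,\mathcal{I}_{Y|H}(1)) = 0$, i.e.\ that the general hyperplane section $Y$ is linearly normal in $H \cong \P^{r-1}$. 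Iterating, the question reduces to the case of a surface $\overline{S} \subseteq \P^{r - (\dim X - 2)}$ of codimension two. At each stage one must check the vanishing $H^2(\mathcal{I}(\cdot)) = 0$ needed to keep the induction going; for a codimension two subvariety this should follow from the fact that it has no embedded components and the cohomology of $\mathcal{O}$ on a hyperplane section of a codimension two (hence, after one cut, still codimension two) variety behaves well — here I expect to invoke that a general hyperplane section of an irreducible variety of dimension $\geq 2$ is again irreducible (Bertini), so all the intermediate $H^0$'s of structure sheaves are one-dimensional.

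For the surface base case, a codimension two surface $\overline{S} \subseteq \P^4$: here linear normality can be extracted from the classification / the fact that a non-degenerate surface in $\P^4$ which is not linearly normal would be an isomorphic projection of a surface in $\P^5$ of the same degree, and a dimension count on the secant variety (every surface in $\P^5$ has secant variety of dimension $\leq 5 < 5$... — more precisely $\leq 5$), forcing such projections to exist only in degenerate situations; alternatively one uses that a variety of minimal degree in codimension two is a quadric hypersurface intersection or similar, all of which are projectively normal. Actually the cleanest surface argument: if $\overline{S}\subseteq\P^4$ has codimension $2$ and were linearly abnormal, it is the isomorphic projection of a non-degenerate $S\subseteq\P^5$ of codimension $3$; but then the secant variety $\Sec(S)$ would have dimension at most $5 = \dim \P^5$ only if $\Sec(S) = \P^5$, whereas the projection being an isomorphism forces $\Sec(S) \subsetneq \P^5$, and by the classification of varieties with deficient secant variety (Severi), $S$ must be a Veronese or a cone, each of which is linearly normal — contradiction.

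The main obstacle I anticipate is establishing the needed vanishings $H^1(\P^r,\mathcal{I}_{X|\P^r}) = H^2(\P^r,\mathcal{I}_{X|\P^r}) = 0$ cleanly enough to power the inductive hyperplane-section step, and making the "general hyperplane section is again codimension two, irreducible, and linearly normal by induction" bookkeeping airtight — in particular handling the degree-$1$ twist carefully so the connecting maps vanish. A secondary subtlety is the surface base case, where one must correctly invoke either the classification of secant-defective varieties or a direct Castelnuovo-type bound; I would lean on the former (Severi's theorem) since it gives linear normality of the only possible offenders immediately. Everything else is a routine long-exact-sequence chase.
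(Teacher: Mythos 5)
Your inductive reduction to a general hyperplane section is essentially sound, and in fact you only need $H^1(\P^r,\mathcal{I}_{X|\P^r})=0$, which holds because $X$ is irreducible and reduced; your additional claim that $H^2(\P^r,\mathcal{I}_{X|\P^r})=0$ for any connected variety is false in general (this group is $H^1(X,\mathcal{O}_X)$), but it is not needed for the direction of the implication you use. The genuine gap is the base case: the statement you are inducting down to --- that every non-degenerate codimension two surface in $\P^4$ is linearly normal --- is simply false. The isomorphic projection of the second Veronese surface from a point of $\P^5$ off its deficient, $4$-dimensional secant variety is a smooth non-degenerate surface of degree $4$ and codimension $2$ in $\P^4$ with $h^0(\mathcal{O}(1))=6>5$. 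Your claimed contradiction (``$S$ must be a Veronese or a cone, each of which is linearly normal --- contradiction'') conflates linear normality of $S\subseteq\P^5$ with linear normality of its projection $\overline{S}\subseteq\P^4$; Severi's theorem yields no contradiction here, only a classification of the exceptional surfaces.

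Because the surface statement fails, the induction must bottom out at threefolds, and additional geometry is required there; this is precisely the content of the paper's proof that your proposal is missing. If a threefold $X\subseteq\P^5$ of codimension two were not linearly normal, a general hyperplane section would be a non-linearly-normal non-degenerate surface in $\P^4$, hence by the Severi classification a projected second Veronese surface, which has degree $4$; so $X\subseteq\P^6$ would be a threefold of minimal degree, i.e.\ a rational normal scroll or a cone over the Veronese surface. One then rules these out directly: a cone cannot be projected isomorphically because the tangent space at a vertex is all of $\P^6$, and a rational normal scroll cannot because its tangent variety fills $\P^6$. Without this extra step at the threefold level, your argument does not close.
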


\begin{proof} Note that if $X$ is contained in a hyperplane in $\P^r$, then the proposition is obvious. So in the sequel we assume $X \subseteq \P^r$ is non-degenerate. We first show that the only non-degenerate surface $S$ in $\P^4$ which is not linearly normal is obtained by an isomorphic projection of the second Veronese surface in $\P^5$. Indeed, if $S \subseteq \P^4$ is not linearly normal, then it can be projected isomorphically from $\P^5$. Thus in $\P^5$, the secant variety of $S$ should have dimension $4$. By Severi's theorem for singular surfaces \cite{D}, $S \subseteq \P^5$ is either a cone or the second Veronese surface in $\P^5$. But if $S$ is a cone, then the tangent space of $S$ at a vertex point is the whole space $\P^5$. So in this case, $S$ cannot be projected to $\P^4$ isomorphically. Hence $S $ is a projected second Veronese surface in $\P^4$.

Now let $X$ be a threefold in $\P^5$. Suppose otherwise $X \subseteq \P^5$ is not linearly normal so that it is obtained by an isomorphic projection of $X \subseteq \P^6$.
Let $Y$ be a general hyperplane section of $X \subseteq \P^5$. Then $Y \subseteq \P^4$ is not linearly normal. So by the above argument, $Y$ is a projected second Veronese surface in $\P^4$. This implies that $X \subseteq \P^6$ has degree $4$, and therefore, it is a variety of minimal degree. Hence $X \subseteq \P^6$ is either a rational normal scroll or a cone over the second Veronese surface. If $X$ is a cone, then the tangent space of $X$ at a vertex point is the whole $\P^6$ so that $X$ cannot be projected isomorphically to $\P^5$. Hence $X$ is not a cone, and it is a rational normal scroll. However in this case the tangent variety of $X$ fills all $\P^6$, so again it cannot be projected isomorphically into $\P^5$. Hence $X \subseteq \P^5$ has to be linearly normal.

Finally, consider a non-degenerate projective variety $X \subseteq \P^r$ of codimension $2$ and dimension $\geq 4$. By taking general hyperplane section which is linearly normal by induction, we see that $X \subseteq \P^r$ is linearly normal as claimed.
\end{proof}

\begin{proof}[Proof of Theorem \ref{main2}]
Suppose first that $X \subseteq \P^5$ is contained in a hyperquadric, i.e., $H^0(\P^5, \mathcal{I}_{X|\P^5}(2)) \neq 0$. If $X \subseteq \P^5$ is a complete intersection, then $\reg(X) \leq d-1$ and the equality holds exactly when $X \subseteq \P^5$ is of type $(2,2)$.  If $X \subseteq \P^5$ is not a complete intersection, then $\reg(X)=\frac{d+1}{2}$ by \cite[Proposition 2.9 (2)]{Kw}. Thus $\reg(X) \leq d-1$ and the equality holds when $d=3$, i.e., $X \subseteq \P^5$ is a variety of minimal degree. From now on, we assume that $X \subseteq \P^5$ is not contained in a hyperquadric, i.e., $H^0(\P^5, \mathcal{I}_{X|\P^5}(2))=0$. We actually prove that $\reg(X) \leq d-2$, which completes the proof.

As in the proof of Theorem \ref{main}, take a general projection $\pi \colon X \to \overline{X} \subseteq \P^4$ from a general point in $\P^5$. By Theorem \ref{regfiber}, $\pi^{-1}(y)=X_y \subseteq \P_y^1$ is $5$-regular. Thus the map
$$
w \colon \mathcal{O}_{\P^4}(-4) \oplus \mathcal{O}_{\P^4}(-3) \oplus \mathcal{O}_{\P^4}(-2) \oplus  \mathcal{O}_{\P^4}(-1) \oplus \mathcal{O}_{\P^4} \longrightarrow \pi_* \mathcal{O}_X
$$
is surjective, and the kernel of $w$ is a vector bundle $E$ on $\P^4$. We now claim that 
$$
\reg(E^*) \leq -3.
$$ 
By \cite[Lemma 3.1]{Kw2}, it is sufficient to show that $X \subseteq \P^5$ is linearly normal and $H^1(X, \mathcal{O}_X)=0$. By Proposition \ref{cod2=>linnorm}, $X \subseteq \P^5$ is linearly normal. On the other hand, \cite[Corollary 5.3]{FL} says that $X$ is simply connected and therefore $H^1(X,\C)=0$. 
Since $X$ has Du Bois singularities, it follows that $H^1(X, \C) \to H^1(X, \mathcal{O}_X)$ is surjective. Thus we obtain $H^1(X, \mathcal{O}_X)=0$. We have shown the claim. Now, by applying  Lemma \ref{regbound}, we see that $\reg(X) \leq d-2$. 
\end{proof}

$ $

\end{document}